\theoremstyle{plain}
\newtheorem{theorem}{Theorem}
\newtheorem{lemma}[theorem]{Lemma}
\newtheorem{proposition}[theorem]{Proposition}
\newtheorem{definition}[theorem]{Definition}
\newtheorem{corollary}[theorem]{Corollary}
\begin{document}

\noindent{\Large 
Quasi-derivations of   Witt and related  algebras}
 \footnote{
 The  authors would like to thank the SRMC (Sino-Russian Mathematics Center in Peking University, Beijing, China) for its hospitality and excellent working conditions, where some parts of this work have been done.
The  work is supported by 
FCT   2023.08031.CEECIND and UID/00212/2023.}

	\bigskip
	
	 \bigskip
	
	 \bigskip

\begin{center}	
	{\bf
		Ivan Kaygorodov\footnote{CMA-UBI, University of  Beira Interior, Covilh\~{a}, Portugal; 
  \    kaygorodov.ivan@gmail.com},
   Abror Khudoyberdiyev\footnote{V.I.Romanovskiy Institute of Mathematics Academy of Science of Uzbekistan; National University of Uzbekistan; \ 
khabror@mail.ru} \&
Zarina Shermatova\footnote{V.I.Romanovskiy Institute of Mathematics Academy of Science of Uzbekistan; Kimyo International University in Tashkent;  \ 
zarina\_shermatova91@mail.ru}}   
\end{center}

 \bigskip

   \bigskip

\noindent {\bf Abstract.}
{\it 
In the present work, we compute   quasi-derivations of the Witt algebra and some algebras well-related to the Witt algebra. Namely, we  prove that each quasi-derivation of the Witt algebra is a sum of a derivation and a $\frac{1}{2}$-derivation; a similar result is obtained for the Virasoro algebra. 
A different situation appears for   Lie algebras ${\mathcal W}(a,b):$ in the case of $b=-1,$ they do not have interesting examples of quasi-derivations, but the case of $b\neq-1$ provides some new non-trivial examples of quasi-derivations. 
We also completely describe all quasi-derivations of  ${\mathcal W}(a,b).$
As a corollary, we describe the derivations and quasi-derivations of the Novikov-Witt and admissible Novikov-Witt algebras previously constructed by Bai and his co-authors;
and $\delta$-derivations and transposed $\delta$-Poisson structures on cited Lie algebras.
 In particular, we proved that each  ${\mathcal W}(a,b)$ admits a nontrivial transposed $\frac 1{1-b}$-Poisson structure.

}
 \bigskip

 \bigskip
\noindent {\bf Keywords}: 
{\it   Lie algebra, Witt algebra, Virasoro algebra,  generalized derivation, transposed $\delta$-Poisson structure.}

 \bigskip
\noindent {\bf MSC2020}: 17B68, 17A30, 17B40.

  \bigskip

  \bigskip

\tableofcontents

\newpage
\section*{Introduction} 
A notion of reverse derivations, as a particular case of Jordan derivations, was given in a paper by Herstein in 1957 \cite{her}.
In the anticommutative case, the notion of reverse derivations gives antiderivations,
which were actively studied by Filippov \cite{f95}. For example, he proved that
any prime Lie algebra with a nonzero antiderivation satisfies the standard identity of degree $5.$
Later, Filippov generalized the notion of derivations and antiderivations to $\delta$-derivations \cite{f98},
which were actively studied for algebras from various varieties (see \cite{k23,zz,z,k12} and references therein).
A notion, which generalized $\delta$-derivations, was introduced by Leger and Luks under the name generalized derivations, in 2000 \cite{LL}.
So, generalized derivations of triangular, alternative, and Jordan algebras were studied
by Jiménez-Gestal and Pérez-Izquierdo \cite{jp03}; 
Shestakov \cite{sh12}; 
and Martín Barquero, Martín González, Sánchez-Ortega, and Vandeyar \cite{MMS}.
On the other side, generalized derivations of Lie algebras give a particular case of near-derivations defined by Bre$\check{\textrm{s}}$ar in \cite{Br}, and generalized derivations are well-related to Jordan $\{g,h\}$-derivations introduced by Bre$\check{\textrm{s}}$ar in \cite{Br2}.
Generalized derivations for $n$-ary algebras were studied in \cite{ KP}.
Generalized derivations have appeared in various contexts.
So, they were used by   Andruszkiewicz, Brzezi\'nski, and Radziszewski to construct Lie affgebras in \cite{ABR}
and by Jiménez-Gestal and Pérez-Izquierdo in the study of finite-dimensional real division algebras \cite{jp}.
Burde and Dekimpe used generalized derivations of Lie algebras for studying post-Lie algebra structures \cite{BD}.
Each non-trivial $\delta$-derivation of a Lie algebra gives a ${\rm Hom}$-Lie structure \cite{f98}.
$\frac 12$-derivations play an important role in the definition of transposed Poisson algebras \cite{aae23,kk23,kkg23,kms,kkz1}.
 
\medskip 
The Witt algebra and its central extension (Virasoro algebra) are among the most popular infinite-dimensional Lie algebras.
The Witt algebra plays a role as a foundation for the construction of other important algebraic structures.
So, transposed Poisson structures on the Witt algebra were described in \cite{FKL};
admissible Novikov algebras on the Witt algebra were constructed in \cite{BG};
Witt Lie conformal algebras were studied in  \cite{akl}.
Also, the Witt algebra is a subalgebra in various infinite-dimensional Lie algebras (see \cite{kk23,kkg23,kkz1,tang2,gao16,gao,FKL,BG} and references therein).
The Witt algebra is under a certain pure algebraic interest now:
$\frac{1}{2}$-derivations were described in \cite{FKL};
 (anti)-Rota-Baxter operators were studied in \cite{gao16,Az};
derivations, extensions, and rigidity of subalgebras of the Witt algebra were studied by Buzaglo in \cite{B24};
Sierra and Walton proved that the universal enveloping algebra of the Witt algebra is not noetherian \cite{sw};
Sierra and Petukhov described the Poisson spectrum of the symmetric algebra of the Witt algebra \cite{sp};
Kong, Chen, Bai, and Tang studied left-symmetric structures on the Witt algebra \cite{TB,KCB}.
Let us also mention a big interest in the study of local mappings on generalizations of Witt algebras \cite{CYK,YK}
and a greater interest in the study of   modules and representations over generalized Witt algebras \cite{BY,GZ,XW,MZ,SST}.
A generalization of the Witt algebra, known as ${\mathcal {W}}(a,b)$ algebra, appeared in "Bombay Lectures" of Kac and Raina \cite{kac} (see also, \cite{mat}). 
Various linear mappings on ${\mathcal {W}}(a,b)$ are under an active investigation now \cite{FKL,gao,han,XDM,tang1,tang2}.
 
\medskip 

The paper is organized as follows.
Section \ref{pre} is dedicated to the main definitions used in the paper.
Section \ref{witt} describes quasi-derivations of the Witt algebra. As corollaries, we have the description of derivations and quasi-derivations of two types of Witt-admissible algebras, previously constructed in  \cite{TB, KCB, BG}.
Section \ref{vira} describes quasi-derivations of the Virasoro algebra.
Section \ref{wab} describes quasi-derivations and transposed $\delta$-Poisson structures of algebras ${\mathcal W}(a,b)$.

\newpage
\section{Preliminaries}\label{pre}

In this section, we recall some definitions and known results for studying generalized derivations and quasi-derivations of Lie algebras. 
All algebras and vector spaces are considered over the complex field. 
Let ${\rm S}$ be a set of vectors; we denote the linear span of ${\rm S}$ by $\langle {\rm S} \rangle.$

\begin{definition}
 Let ${\mathfrak L}$ be a Lie algebra.  The linear map ${\mathfrak D}:{\mathfrak L}\rightarrow {\mathfrak L}$
is called a \textit{derivation} if $${\mathfrak D}[x,y]\ =\ [{\mathfrak D}(x),y]+[x,{\mathfrak D}(y)].$$ 
As usual, we denote the set of all derivations of ${\mathfrak L}$ by $\mathfrak{Der}({\mathfrak L}).$ \end{definition}
Obviously,
$\mathfrak{Der}({\mathfrak L})$ is a Lie subalgebra of the general linear algebra $\mathfrak{gl}({\mathfrak L}).$ There are several generalizations of the notion of a derivation. In this paper, we consider generalized derivations and quasi-derivations of Lie
algebras as defined by Leger and Luks in \cite{LL}. 
\begin{definition}
    Let $f: {\mathfrak L} \rightarrow {\mathfrak L}$ is a linear map. If there exist linear maps $f',$ $ f'' : {\mathfrak L} \rightarrow {\mathfrak L}$ such that 
$$[f(x),y]+[x,f'(y)]\ =\ f''[x,y],$$
then $f$ is called a \textit{generalized derivation}. 
In case $f=f',$ then $f$ is said to be a \textit{quasi-derivation}.
In case $ f= f'= \delta f'',$ then $f''$ is said to be a \textit{$\delta$-derivation}.
\end{definition}

By ${\rm G}\mathfrak{Der}({\mathfrak L})$ we will denote the set of all generalized derivations of ${\mathfrak L};$  
by ${\rm Q}\mathfrak{Der}({\mathfrak L})$ the set of all quasi-derivations of ${\mathfrak L};$
by $\mathfrak{Der}_{\delta }({\mathfrak L})$ we will denote the set of all $\delta$-derivations (for a fixed $\delta$) of ${\mathfrak L};$
by $\mathfrak{Der}_{[\delta]}({\mathfrak L})$ we will denote the set of all $\delta$-derivations (for all $\delta \in \mathbb{C}$) of ${\mathfrak L}.$

Obviously, ${\rm Q}\mathfrak{Der}(\mathfrak L)$ and ${\rm G}\mathfrak{Der}(\mathfrak L)$
are Lie subalgebras of $\mathfrak{gl}(\mathfrak L)$ such that 
$$\mathfrak{Der}(\mathfrak L)\subseteq
\mathfrak{Der}_{[\delta]}({\mathfrak L}) \subseteq {\rm Q}\mathfrak{Der}(\mathfrak L)\subseteq {\rm G}\mathfrak{Der}(\mathfrak L)\subseteq \mathfrak{gl}(\mathfrak L).$$  
Another Lie subalgebra of  $\mathfrak{gl}(\mathfrak L)$ is the centroid  of $\mathfrak L,$ which is defined as  
$${\rm C}(\mathfrak L)= \big\{\varphi\in \mathfrak{gl}(\mathfrak L)\ |\  \varphi([x,y])=[x,\varphi(y)]  \big\}.$$
Thus ${\rm C}( \mathfrak L)\subseteq {\rm Q}\mathfrak{Der}(\mathfrak L)$ and so 
$\mathfrak{Der}(\mathfrak L)+{\rm C}(\mathfrak L)\subseteq {\rm Q}\mathfrak{Der}(\mathfrak L).$
In several cases, this is a strict inclusion. However, for some Lie algebras (see  \cite{LL,Br}) we have 
\begin{center} 
$ \mathfrak{Der}(\mathfrak L)+{\rm C}(\mathfrak L)= {\rm Q}\mathfrak{Der}(\mathfrak L)$ or  
$\mathfrak{Der}(\mathfrak L)+{\rm C}(\mathfrak L)= {\rm G}\mathfrak{Der}(\mathfrak L).$   
\end{center}
Note that for any centerless Lie algebra $\mathfrak L$, the sum 
$$\mathfrak{Der}(\mathfrak L)+{\rm C}(\mathfrak L)= \mathfrak{Der}(\mathfrak L)\oplus {\rm C}(\mathfrak{L})$$ is a direct sum of vector spaces. 

Recall that the notion of a quasi-centroid ${\rm QC}(\mathfrak L)$ of a Lie algebra $\mathfrak L$ was defined in \cite{LL} as
$${\rm QC}(\mathfrak L)\ =\ \big\{f\in \mathfrak{gl}(\mathfrak L)\ |\  [f(x),y]=[x,f(y)]\big\}.$$
Obviously, ${\rm C}(\mathfrak L)\subseteq {\rm QC}(\mathfrak L)$ and 
\begin{equation}\label{3}
 {\rm G}\mathfrak{Der}(\mathfrak L)={\rm Q}\mathfrak{Der}(\mathfrak L)+{\rm QC}(\mathfrak L)   
\end{equation}
(see \cite{LL}). Note that each commuting linear map $f: \mathfrak L\rightarrow \mathfrak  L$ $\big($that is, $[f(x),x]=0$ $\big)$ belongs to ${\rm QC}(\mathfrak L).$ Moreover, ${\rm QC}(\mathfrak L)$ coincides with the set of all commuting linear maps of $\mathfrak L.$
 Let $\mathfrak L$ be a centerless Lie algebra. 
  Suppose that $\mathfrak L$ is a Lie algebra such that ${\rm Z}_{\mathfrak L} \big([\mathfrak L,\mathfrak L]\big)=0.$ Then, by the result of Bre$\check{\textrm{s}}$ar and Zhao \cite{BZ} implies that the set of all commuting linear maps of $\mathfrak L$
coincides with ${\rm C}(\mathfrak L).$ Thus ${\rm QC}(\mathfrak L)={\rm C}(\mathfrak L)\subseteq {\rm Q}\mathfrak{Der}(\mathfrak L)$ and hence \eqref{3} implies 
\begin{center}
    ${\rm G}\mathfrak{Der}(\mathfrak L)={\rm Q}\mathfrak{Der}(\mathfrak L).$

\end{center}

\begin{definition}[see \cite{delta,deltu}]
Let $\delta$ be a fixed complex number.
An algebra $({\rm P}, \cdot, [\cdot,\cdot])$ is defined to be a {transposed $\delta$-Poisson algebra}, 
if $({\rm P}, \cdot)$ is a  commutative associative    algebra,
$({\rm P},  [\cdot,\cdot])$ is a Lie algebra and the following
identity holds:
\begin{equation}\label{deltranspois}
x\cdot [  y, z ] = \delta \big(  [x \cdot y,z ] + [y,x\cdot z]  \big).
\end{equation}
A Lie algebra $({\mathfrak L},  [\cdot, \cdot])$ has a nontivial transposed $\delta$-Poisson structure $\cdot$ if   $({\mathfrak L}, \cdot, [\cdot, \cdot])$ is a transposed $\delta$-Poisson algebra with both nonzero multiplications.
\end{definition}

Description of $\delta$-derivations (or, more generally, $\delta$-biderivations) gives a constructive way to obtain a classification of transposed $\delta$-Poisson structures on a certain Lie algebra. Namely, the next obvious statement is a generalization of \cite{FKL}.

\begin{lemma}\label{glavlem}

Let $({\mathfrak L},\cdot,[\cdot,\cdot])$ be a transposed $\delta$-Poisson algebra 
and $z$ an arbitrary element from ${\mathfrak L}.$
Then the right multiplication ${\rm R}_z$ in the associative commutative algebra $({\mathfrak L},\cdot)$ gives a $\delta$-derivation of the Lie algebra $({\mathfrak L}, [\cdot,\cdot]).$
In particular, 
 if $\mathfrak{Der}_{\delta}({\mathfrak L})\subseteq \big\langle {\rm Id}\big\rangle,$  then are no nontrivial transposed $\delta$-Poisson structures$.$

\end{lemma}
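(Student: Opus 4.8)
The plan is to verify directly that ${\rm R}_z$ satisfies the defining identity of a $\delta$-derivation, and then to read off the ``in particular'' clause by a short scalar argument. First I would unwind the definition: specialising the generalized-derivation condition $[f(x),y]+[x,f'(y)]=f''[x,y]$ to the case $f=f'=\delta f''$ and writing $d=f''$, one sees that a linear map $d$ is a $\delta$-derivation exactly when
$$d[x,y]\ =\ \delta\big([d(x),y]+[x,d(y)]\big).$$
Hence, for $d={\rm R}_z$ (so that ${\rm R}_z(x)=x\cdot z$), the goal reduces to establishing
$$[x,y]\cdot z\ =\ \delta\big([x\cdot z,y]+[x,y\cdot z]\big).$$

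The key observation is that this is precisely the transposed $\delta$-Poisson identity \eqref{deltranspois} after a relabelling that exploits the commutativity of $\cdot$. Concretely, I would write $[x,y]\cdot z=z\cdot[x,y]$ and apply \eqref{deltranspois} under the substitution $x\mapsto z$, $y\mapsto x$, $z\mapsto y$, obtaining
$$z\cdot[x,y]\ =\ \delta\big([z\cdot x,y]+[x,z\cdot y]\big);$$
replacing $z\cdot x=x\cdot z={\rm R}_z(x)$ and $z\cdot y=y\cdot z={\rm R}_z(y)$ then gives exactly the identity above. This shows ${\rm R}_z\in\mathfrak{Der}_\delta({\mathfrak L})$ for every $z\in{\mathfrak L}$. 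I do not expect any genuine obstacle here: once commutativity is used to bring $z$ to the front, the claim is a verbatim instance of the governing identity.

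For the second assertion I would argue as follows. Assume $\mathfrak{Der}_\delta({\mathfrak L})\subseteq\langle{\rm Id}\rangle$ and let $\cdot$ be any transposed $\delta$-Poisson structure on $({\mathfrak L},[\cdot,\cdot])$. By the first part every ${\rm R}_z$ lies in $\langle{\rm Id}\rangle$, so there is a scalar $\lambda(z)$ with $x\cdot z=\lambda(z)\,x$ for all $x$. Commutativity then forces $\lambda(z)\,x=x\cdot z=z\cdot x=\lambda(x)\,z$ for all $x,z$, and evaluating this on any pair of linearly independent vectors shows that $\lambda$ vanishes identically. Therefore $x\cdot z=0$ for all $x,z$, i.e.\ the commutative associative product is zero and the structure is trivial. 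The only point deserving a word of care is this last linear-algebra step, where one must note that the Lie algebras under consideration have dimension at least two, so genuinely independent vectors exist.
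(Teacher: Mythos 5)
Your proof is correct and is exactly the intended argument: the paper states this lemma without proof (calling it an obvious generalization of \cite{FKL}), and the verification amounts, as you show, to reading the identity \eqref{deltranspois} with $z$ moved to the first slot via commutativity of $\cdot$. The scalar argument for the ``in particular'' clause is also fine; the dimension caveat you raise is vacuous here since a one-dimensional Lie algebra has zero bracket and hence admits no nontrivial transposed $\delta$-Poisson structure by definition.
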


\section{Quasi-derivations of Witt algebra}\label{witt}
 
\begin{definition} 
  The Witt algebra ${\rm W}$ is an algebra   with a basis $\big\{L_i\big\}_{i\in\mathbb{Z}}$ and the multiplication given by the following way $$[L_i, L_j]=(i- j)L_{i+j} .$$
\end{definition}

\begin{proposition}\label{exquasiW}
Let   ${\rm W}$ be the Witt algebra,  then 
\begin{enumerate}
    \item[$({\mathfrak a})$] $\mathfrak{Der}\big({\rm W}\big) \ = \ {\rm Inn}\mathfrak{Der}\big({\rm W}\big)\  =\  \big\langle d_j\big \rangle_{j \in {\mathbb Z}},$
     where $d_j(L_i)\ =\  [L_j,L_i].$ 
    
    \item[$({\mathfrak b})$] $\mathfrak{Der}_{\frac{1}{2}}\big({\rm W}\big) \ =\  \langle \varphi_j \rangle_{j \in \mathbb Z},$ where 
    $\varphi_j(L_i)\ =\    L_{i+j}.$


    
\end{enumerate}

\end{proposition}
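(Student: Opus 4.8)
The plan is to exploit the $\mathbb{Z}$-grading ${\rm W}=\bigoplus_{i\in\mathbb{Z}}\langle L_i\rangle$ and to reduce both statements to scalar functional equations indexed by the degree. For a linear map $f$ (a derivation in part $({\mathfrak a})$, a $\frac{1}{2}$-derivation in part $({\mathfrak b})$) I would write $f(L_i)=\sum_{n\in\mathbb{Z}}c_i^{(n)}L_{i+n}$, where for each fixed $i$ only finitely many coefficients $c_i^{(n)}$ are nonzero because $f(L_i)\in{\rm W}$. Substituting this expansion into the defining identity evaluated on $[L_i,L_j]=(i-j)L_{i+j}$ and comparing the coefficients of each basis vector $L_{i+j+n}$ turns the operator identity into a family of scalar relations that decouples across the degrees $n$, since the bracket is degree-additive. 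This is the key simplification: because we only equate coefficients in a fixed basis, no separate verification that the homogeneous components of $f$ are themselves (quasi-)derivations is required.

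For part $({\mathfrak a})$ the derivation rule yields, in degree $n$, the relation $(i-j)\,c_{i+j}^{(n)}=(i+n-j)\,c_i^{(n)}+(i-j-n)\,c_j^{(n)}$. Specialising $j=0$ collapses this to $-n\,c_i^{(n)}=(i-n)\,c_0^{(n)}$, so for $n\neq 0$ the profile $i\mapsto c_i^{(n)}$ is forced to be proportional to $n-i$; since $d_n(L_i)=(n-i)L_{i+n}$ already realises this, the degree-$n$ part of $f$ is a scalar multiple of $d_n$. The degree $0$ case is the only genuinely different one: there the relation degenerates to the additive equation $c_{i+j}^{(0)}=c_i^{(0)}+c_j^{(0)}$ valid for $i\neq j$, together with $c_0^{(0)}=0$. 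I would solve this over $\mathbb{Z}$ by a short bootstrap (for instance, comparing the two expansions $c_5^{(0)}=c_1^{(0)}+c_4^{(0)}=c_2^{(0)}+c_3^{(0)}$ gives $c_2^{(0)}=2c_1^{(0)}$, and then induction) to conclude $c_i^{(0)}=i\,c_1^{(0)}$, again a multiple of $d_0$.

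For part $({\mathfrak b})$ the $\frac{1}{2}$-derivation rule gives $2(i-j)\,c_{i+j}^{(n)}=(i+n-j)\,c_i^{(n)}+(i-j-n)\,c_j^{(n)}$. Setting $j=0$ now produces $(i-n)\,c_i^{(n)}=(i-n)\,c_0^{(n)}$, hence $c_i^{(n)}=c_0^{(n)}$ for every $i\neq n$; the constant profile, realised by $\varphi_n(L_i)=L_{i+n}$, is visibly a solution, so the only thing left to determine is the single exceptional coefficient $c_n^{(n)}$, which the substitution $j=0$ leaves free. I expect this degenerate index to be the main obstacle. I would resolve it by using the relation along the line $i+j=n$ with $i,j$ distinct and nonzero: then $i\neq n$ and $j\neq n$, so $c_i^{(n)}=c_j^{(n)}=c_0^{(n)}$ are already known, the right-hand side simplifies to $2(i-j)c_0^{(n)}$, and cancelling the factor $i-j$ forces $c_n^{(n)}=c_0^{(n)}$ as well. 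One only needs that for every $n$ a pair $(i,n-i)$ of distinct nonzero integers exists, which holds because merely finitely many values of $i$ are excluded. Thus each degree-$n$ component is a multiple of $\varphi_n$.

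Finally, in both parts I would pass from this homogeneous description to the stated spans by a finiteness argument. Writing $f=\sum_n\lambda_n d_n$ (respectively $\sum_n\lambda_n\varphi_n$), the requirement that each $f(L_i)$ have finite support inside ${\rm W}$ forces $\lambda_n=0$ for all but finitely many $n$, so that $f$ is an honest finite linear combination and no infinite ``formal'' derivations intervene. This yields $\mathfrak{Der}({\rm W})=\langle d_j\rangle_{j\in\mathbb{Z}}$ and $\mathfrak{Der}_{\frac{1}{2}}({\rm W})=\langle\varphi_j\rangle_{j\in\mathbb{Z}}$; the identification with the inner derivations in $({\mathfrak a})$ is then immediate, since $d_j=\mathrm{ad}(L_j)$ and $\{L_j\}_{j\in\mathbb{Z}}$ spans ${\rm W}$.
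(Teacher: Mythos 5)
Your argument is correct, but it is genuinely different from what the paper does: the paper does not prove this proposition at all, it simply cites Ecker--Schlichenmaier for $({\mathfrak a})$ and \cite{FKL} for $({\mathfrak b})$. Your direct computation is the standard weight-space argument and it checks out: the coefficient relation $(i-j)c_{i+j}^{(n)}=(i+n-j)c_i^{(n)}+(i-j-n)c_j^{(n)}$ (with the extra factor $2$ on the left in part $({\mathfrak b})$) is the correct expansion, the $j=0$ specialisations are computed correctly, the bootstrap $c_5^{(0)}=c_1^{(0)}+c_4^{(0)}=c_2^{(0)}+c_3^{(0)}$ does yield $c_2^{(0)}=2c_1^{(0)}$ and hence $c_i^{(0)}=ic_1^{(0)}$ (a multiple of $d_0$ since $d_0(L_i)=-iL_i$), and your treatment of the exceptional coefficient $c_n^{(n)}$ in part $({\mathfrak b})$ via a pair $i+j=n$ with $i,j$ distinct and nonzero is exactly the right fix for the degeneration of the $j=0$ substitution. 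The closing finiteness step is also needed and correctly handled: $c_0^{(n)}=n\lambda_n$ (resp.\ $c_0^{(n)}=\lambda_n$) together with the finite support of $f(L_0)$ rules out infinite formal sums. What your route buys is a short, self-contained verification making the paper independent of the two external references; what the paper's route buys is brevity and consistency with the sources it already relies on elsewhere (the same citations reappear for the Virasoro and ${\mathcal W}(a,b)$ analogues, where a from-scratch computation would be longer).
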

 
\begin{proof}
    ($\mathfrak{a}$) follows from \cite{Ayupov}.
    ($\mathfrak{b}$) follows from \cite{FKL}.
\end{proof}

\begin{theorem}\label{qWitt}
 $ {\rm Q}\mathfrak{Der} \big({\rm W} \big)\ =\ 
  \mathfrak{Der} \big({\rm W} \big) \oplus 
  \mathfrak{Der}_{\frac 12} \big({\rm W} \big).$
\end{theorem}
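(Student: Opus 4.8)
The inclusions $\mathfrak{Der}(\mathrm W)\subseteq \mathrm Q\mathfrak{Der}(\mathrm W)$ and $\mathfrak{Der}_{\frac12}(\mathrm W)\subseteq \mathrm Q\mathfrak{Der}(\mathrm W)$ are immediate from the chain of inclusions recorded in Section~\ref{pre}, so the real content of the theorem is the reverse inclusion together with the directness of the sum. The plan is to dispose of directness first: if a linear map $g$ were simultaneously a derivation and a $\frac12$-derivation, then $g([x,y]) = [g(x),y]+[x,g(y)] = 2g([x,y])$, so $g$ vanishes on $[\mathrm W,\mathrm W]=\mathrm W$, giving $\mathfrak{Der}(\mathrm W)\cap\mathfrak{Der}_{\frac12}(\mathrm W)=0$. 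Everything then reduces to proving $\mathrm Q\mathfrak{Der}(\mathrm W)\subseteq \mathfrak{Der}(\mathrm W)+\mathfrak{Der}_{\frac12}(\mathrm W)$.

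My first move would be to exploit the $\mathbb Z$-grading $\mathrm W=\bigoplus_{i\in\mathbb Z}\langle L_i\rangle$. Given $f\in\mathrm Q\mathfrak{Der}(\mathrm W)$ with partner $f''$ (which is unique because $\mathrm W$ is perfect), I decompose both into homogeneous components $f=\sum_n f_n$ and $f''=\sum_n f''_n$, where $f_n(L_i)\in\langle L_{i+n}\rangle$. Since the bracket is homogeneous, comparing the $L_{i+j+n}$-coefficients in $[f(L_i),L_j]+[L_i,f(L_j)]=f''([L_i,L_j])$ shows that each $f_n$ is itself a quasi-derivation with partner $f''_n$. Writing $f_n(L_i)=a_iL_{i+n}$ and $f''_n(L_i)=b_iL_{i+n}$, this comparison produces the scalar identity
\[
a_i(i+n-j)+a_j(i-j-n)=(i-j)\,b_{i+j},\qquad i,j\in\mathbb Z.
\]
The whole problem now splits over $n$, and it is enough to show that every solution $(a_i)$ is affine, $a_i=\lambda i+\mu$: the degree-$n$ derivation $d_n$ corresponds to $a_i=n-i$ and the degree-$n$ map $\varphi_n$ to $a_i\equiv 1$, and these two affine functions span all affine functions, so affineness of $(a_i)$ is exactly the assertion that $f_n\in\langle d_n,\varphi_n\rangle$.

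To solve the scalar identity I would treat $n=0$ and $n\neq0$ separately. For $n=0$ it collapses to $a_i+a_j=b_{i+j}$ (for $i\neq j$), so $a_i+a_j$ depends only on $i+j$; comparing the instances $(i+1,j)$ and $(i,j+1)$ gives $a_{i+1}-a_i=a_{j+1}-a_j$ for all $i,j$, so the first difference is constant and $(a_i)$ is affine. For $n\neq0$ I would eliminate $b$ by reading off $b_s=\big((s+n)a_s+(s-n)a_0\big)/s$ from the case $j=0$, substituting it into the general identity to obtain a relation among $a_i$, $a_j$, $a_{i+j}$ and $a_0$ only, and then specialising to $j=1$ to get a first-order recurrence expressing $a_{i+1}$ through $a_i$ with $a_0,a_1$ as the sole free parameters. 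As the two-parameter family of affine functions already satisfies this recurrence and can match any prescribed $a_0,a_1$, uniqueness of the recursion forces $(a_i)$ to be affine; reassembling the homogeneous pieces then yields $f\in\mathfrak{Der}(\mathrm W)\oplus\mathfrak{Der}_{\frac12}(\mathrm W)$.

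The main obstacle is bookkeeping rather than conceptual. The recurrence degenerates at the finitely many indices where a coefficient such as $i-1$, $i+1+n$, or $s=i+j$ vanishes, and these exceptional indices—together with the small degrees $n=\pm1,\pm2$ where several such coincidences pile up—cannot be handled by the single recurrence and must instead be pinned down by invoking other instances of the scalar identity (different choices of $j$). Organising this finite case analysis so that \emph{every} $a_i$ is forced for \emph{every} $n$ is the only delicate point; once it is carried out, the decomposition over degrees completes the proof.
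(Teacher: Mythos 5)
You are correct, and your argument is essentially the paper's proof reorganized by degree: both reduce $f$ modulo the known derivations and $\frac12$-derivations so that (in your notation) $a_0=a_1=0$ in every degree $n$, and then force the remaining coefficients to vanish from a single coefficient identity --- your $a_i$ is the paper's $\alpha_{i,i+n}$ and your scalar identity is the paper's equation \eqref{4} read one diagonal at a time. The one caveat is that the finite case analysis you defer (pinning down the exceptional coefficients such as $a_2$ and $a_{-1}$ in each degree $n\neq 0$, where your $j=1$ recurrence is vacuous) is exactly where the paper's substitutions \eqref{7}--\eqref{9} do their real work; that step is the mathematical heart of the argument rather than routine bookkeeping, since an unlucky cancellation there would produce extra quasi-derivations, so it must be written out before the proof is complete.
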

\begin{proof}
Let $f \in  {\rm Q}\mathfrak{Der}({\rm W}),$ 
hence  $f(L_i) = \sum\limits_{k\in\mathbb{Z}}\alpha_{i,k}L_k.$ 
Let 
\begin{center}
    $d=\sum\limits_{j \in {\mathbb Z}} \beta_j d_{j}$ be a derivation 
and   $\varphi=\sum\limits_{j \in {\mathbb Z}} \alpha_j \varphi_{j}$ be a $\frac 1 2$-derivation. 
\end{center}
\noindent Since the sum of quasi-derivations is again a quasi-derivation, we may replace $f$ by $f-d-\varphi,$ where 
\begin{center}
    $\alpha_j = (1-j)\alpha_{0,j} +j\alpha_{1,j+1}$ and 
$\beta_j = \alpha_{0,j} - \alpha_{1,j+1}.$
\end{center}
Then $f(L_0) = f(L_1)=0$ and we may consider a quasi-derivation $f,$ such that $\alpha_{1,j}=\alpha_{0,j}=0$ for all $j \in  {\mathbb Z}.$

Taking the related linear map $f'$ as $f'(L_i) = \sum\limits_{k\in\mathbb{Z}}\alpha'_{i,k}L_k,$ we have  
\begin{equation}\label{4}
    (k-2j)\alpha_{i,k-j}+(2i-k)\alpha_{j,k-i}-(i-j)\alpha'_{i+j,k}=0.
\end{equation}

Taking $j=0,$ $k=i$ in  \eqref{4}, we have 
$i\alpha_{i,i}\ =  i\alpha'_{i,i},$ which gives $\alpha'_{i,i}\ =  \alpha_{i,i}$ for $i\neq 0.$

Putting $j=1,$ $k=i+1$ in  \eqref{4}, we have 
$(i-1)\alpha_{i,i}\ =  (i-1)\alpha'_{i+1,i+1},$ which implies $\alpha'_{i+1,i+1}\ =  \alpha_{i,i}$ for $i\neq 1.$
Thus, we get $$\alpha'_{1,1} = 0, \qquad  \alpha'_{i,i} =\alpha_{i,i} = \alpha_{2,2}, \quad i \geq 2, \qquad \alpha'_{i,i} =\alpha_{i-1,i-1} = \alpha_{-1,-1}, \quad i \leq 0.$$

Now, taking $i=2,$ $j=3,$ $k=5$ and  $i=-2,$ $j=-3,$ $k=-5,$ respectively, we obtain $\alpha_{2,2}=0$ and $\alpha_{-1,-1}=0.$ 
Therefore, we get $$\alpha'_{i,i}\ =  \alpha_{i,i} = 0 \quad \text{for all} \quad i \in \mathbb{Z}.$$

Next taking $j=0$ in  \eqref{4}, we obtain $k\alpha_{i,k}= i\alpha'_{i,k},$ and the equation \eqref{4} has the form 
\begin{equation}\label{6}
   (i+j)\left((k-2j)\alpha_{i,k-j}+(2i-k)\alpha_{j,k-i}\right)\ = \ (i-j) k\alpha_{i+j,k} \quad \mbox{for} \quad j\neq -i.
\end{equation}

From   \eqref{6}, for $k=0,$ we have 
\begin{equation}\label{7}
   i\alpha_{j,-i}\ = \ j\alpha_{i,-j} \quad \mbox{for} \quad j\neq -i.
\end{equation}
In addition, taking $j=1,$ we derive $\alpha_{i,-1}=0$ for any $i \in \mathbb{Z}.$

Taking $k=i-1$ in \eqref{6}, we have 
\begin{equation}\label{8}(i - 2 j -1) (i + j) \alpha_{i, i-j - 1} - (i-1) (i - j) \alpha_{
   i + j, i-1}=0.\end{equation}

Now, in \eqref{6}, substituting the triple $(i,j,k):=(-i+1, j, -i),$ we obtain 
 \begin{equation}\label{9}(i+2 j) (j - i+1) \alpha_{1 - i, -i - j} + (i+j-1) i \alpha_{- i+j+1, -i}=0.\end{equation}

From \eqref{7}, we have $i \alpha_{- i + j+1, -i} = (- i + j + 1) \alpha_{i, i - j -1 }$ and $(i + j) \alpha_{1 - i, -i - j} =  
  (1 - i)  \alpha_{i + j, i-1}.$
Using these two relations in \eqref{8} and \eqref{9}, we obtain 
$$ \alpha_{i, i-j - 1} =  \alpha_{
   i + j, i-1}=0.$$

Therefore, we obtain that $f=0.$
\end{proof}

The present theorem, Lemma \ref{glavlem}, and the description of symmetric biderivations of ${\rm W}$ (see, for example, in \cite{tang2}), give the following corollary.

\begin{corollary}
If ${\rm W}$ admits a nontrivial transposed $\delta$-Poisson structure, then $\delta=\frac{1}{2}$ and 
all of transposed $\frac{1}{2}$-Poisson structures on ${\rm W}$   are completely described in {\rm \cite{FKL}}.    
\end{corollary}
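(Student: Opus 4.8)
The plan is to translate the statement into a question about $\delta$-derivations and then read off the answer from Theorem~\ref{qWitt}. By Lemma~\ref{glavlem}, any transposed $\delta$-Poisson structure $\cdot$ on ${\rm W}$ makes every right multiplication ${\rm R}_z$ a $\delta$-derivation of ${\rm W}$, and if all $\delta$-derivations are scalar then the structure is forced to be trivial. So the crux is to determine $\mathfrak{Der}_{\delta}({\rm W})$ for every $\delta$. Since the chain of inclusions in Section~\ref{pre} gives $\mathfrak{Der}_{[\delta]}({\rm W}) \subseteq {\rm Q}\mathfrak{Der}({\rm W})$, Theorem~\ref{qWitt} tells me that each $\delta$-derivation $\psi$ can be written uniquely as $\psi = D + \varphi$ with $D \in \mathfrak{Der}({\rm W})$ and $\varphi \in \mathfrak{Der}_{\frac{1}{2}}({\rm W})$.

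Next I would impose the $\delta$-derivation identity on $\psi = D + \varphi$. Using $D[x,y] = [D(x),y]+[x,D(y)]$ and $[\varphi(x),y]+[x,\varphi(y)] = 2\varphi[x,y]$, the identity collapses to
\[
(1-\delta)\,D[x,y] \;=\; (2\delta-1)\,\varphi[x,y] \qquad \text{for all } x,y \in {\rm W}.
\]
Because ${\rm W}$ is perfect, $[x,y]$ exhausts ${\rm W}$, so this reads $(1-\delta)D = (2\delta-1)\varphi$ as operators; the directness of the sum in Theorem~\ref{qWitt} then forces $(1-\delta)D = 0$ and $(2\delta-1)\varphi = 0$ separately, since the left side lies in $\mathfrak{Der}({\rm W})$ and the right side in $\mathfrak{Der}_{\frac{1}{2}}({\rm W})$. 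Consequently $\mathfrak{Der}_{\delta}({\rm W}) = 0$ whenever $\delta \notin \{\tfrac{1}{2}, 1\}$, while $\mathfrak{Der}_{1}({\rm W}) = \mathfrak{Der}({\rm W})$ and $\mathfrak{Der}_{\frac{1}{2}}({\rm W})$ consists precisely of the $\frac{1}{2}$-derivations of Proposition~\ref{exquasiW}. For $\delta \notin \{\tfrac{1}{2},1\}$ the triviality $\mathfrak{Der}_{\delta}({\rm W}) \subseteq \langle {\rm Id}\rangle$ together with Lemma~\ref{glavlem} rules out any nontrivial transposed $\delta$-Poisson structure.

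The remaining and genuinely delicate case is $\delta = 1$, where $\mathfrak{Der}_{1}({\rm W}) = \mathfrak{Der}({\rm W}) = \langle d_j\rangle_{j\in\mathbb{Z}}$ is far from scalar, so the criterion of Lemma~\ref{glavlem} no longer applies directly. Here I would use that the multiplication $B(x,y) = x\cdot y$ of a transposed $1$-Poisson structure is commutative and, by identity~\eqref{deltranspois} with $\delta=1$, a derivation in each argument; that is, $B$ is a symmetric biderivation of ${\rm W}$. The classification of symmetric biderivations of ${\rm W}$ (see \cite{tang2}) shows these are trivial, whence $B = 0$ and the structure degenerates. This is the step I expect to be the main obstacle, precisely because it is the one case not covered by the $\delta$-derivation criterion and it requires the external biderivation input rather than Theorem~\ref{qWitt} alone.

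Finally, assembling the cases, a nontrivial transposed $\delta$-Poisson structure can occur only for $\delta = \tfrac{1}{2}$. In that case $\mathfrak{Der}_{\frac{1}{2}}({\rm W})$ is nontrivial and the associated commutative multiplications are exactly the transposed $\frac{1}{2}$-Poisson structures, whose complete description is the one already obtained in \cite{FKL}. This yields both assertions of the corollary.
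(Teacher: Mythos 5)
Your proposal is correct and follows essentially the same route as the paper, which derives the corollary from Theorem~\ref{qWitt}, Lemma~\ref{glavlem}, and the classification of symmetric biderivations of ${\rm W}$ in \cite{tang2}; you have merely written out in full the computation $(1-\delta)D=(2\delta-1)\varphi$ and the case split that the paper leaves implicit. No gaps.
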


\subsection{Corollaries}

The present subsection is dedicated to some results for Lie-admissible algebras with the underlying  Witt algebra. 
It is known that if $d$ is a derivation (resp., $\delta$-derivations, quasi-derivation, generalized derivation, element of centroid, etc.) of an algebra $({\rm A}, \circ),$ then it is the same for the commutator algebra $({\rm A}, [\cdot,\cdot])$ with the multiplication $[x,y]= x \circ y - y \circ x.$
Hence, the description of linear mappings of the Witt algebra gives a way to describe similar linear mappings of Lie-admissible algebras with the underlying Witt algebra.
On the other hand, the description of Lie derivations and Lie quasi-derivations of the below-mentioned Novikov-Witt and admissible Novikov-Witt algebras are completely determined in Theorem \ref{qWitt}.
The first obvious observations are as follows.

\begin{proposition}
    Let ${\rm A}$ be a Lie-admissible algebra with the underlying Witt algebra, 
    then 
    \begin{center}
        $\rm{QC} \big({\rm A}\big) \ = \ \big\langle {\rm Id} \big\rangle$ and $\mathfrak{Der}_{\delta}\big({\rm A}\big)=0$ for $\delta \notin \big\{\frac 12, 1 \big\}.$
    \end{center}
\end{proposition}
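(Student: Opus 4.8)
The plan is to exploit the transfer principle recorded just before the statement: a quasi-centroid element or a $\delta$-derivation of the product $\circ$ on ${\rm A}$ automatically descends to the commutator Lie algebra $({\rm A},[\cdot,\cdot])={\rm W}$. Indeed, if $f(x)\circ y=x\circ f(y)$ for all $x,y$, then interchanging $x$ and $y$ in this identity and subtracting yields
\[
[f(x),y]=f(x)\circ y-y\circ f(x)=x\circ f(y)-f(y)\circ x=[x,f(y)],
\]
so $f\in{\rm QC}({\rm W})$. Likewise, if $f(x\circ y)=\delta\big(f(x)\circ y+x\circ f(y)\big)$, then expanding $f[x,y]=f(x\circ y)-f(y\circ x)$ gives $f[x,y]=\delta\big([f(x),y]+[x,f(y)]\big)$, so $f\in\mathfrak{Der}_{\delta}({\rm W})$. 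Hence
\[
{\rm QC}({\rm A})\subseteq{\rm QC}({\rm W}),\qquad \mathfrak{Der}_{\delta}({\rm A})\subseteq\mathfrak{Der}_{\delta}({\rm W}),
\]
which reduces both assertions to purely Witt-algebra facts.

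First I would establish ${\rm QC}({\rm W})=\langle{\rm Id}\rangle$. This can be read off a short direct computation on the basis $\{L_i\}$: for $f\in{\rm QC}({\rm W})$ the identity $[f(L_i),L_i]=[L_i,f(L_i)]$ forces $[f(L_i),L_i]=0$, hence $f(L_i)=\alpha_{i,i}L_i$, and comparing with $[f(L_i),L_0]=[L_i,f(L_0)]$ forces all $\alpha_{i,i}$ equal. Alternatively, since ${\rm W}$ is perfect and centerless one has ${\rm Z}_{\rm W}([{\rm W},{\rm W}])=0$, so by the Bre\v{s}ar--Zhao theorem recalled in Section~\ref{pre} the commuting maps of ${\rm W}$ coincide with ${\rm C}({\rm W})$, giving ${\rm QC}({\rm W})={\rm C}({\rm W})=\langle{\rm Id}\rangle$. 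Combined with the obvious inclusion $\langle{\rm Id}\rangle\subseteq{\rm QC}({\rm A})$ (since ${\rm Id}(x)\circ y=x\circ{\rm Id}(y)$), the squeeze
\[
\langle{\rm Id}\rangle\subseteq{\rm QC}({\rm A})\subseteq{\rm QC}({\rm W})=\langle{\rm Id}\rangle
\]
yields the first claim.

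For the second claim I would use the classification of $\delta$-derivations of ${\rm W}$. Writing $f(L_i)=\sum_k\alpha_{i,k}L_k$ and imposing $f[L_i,L_j]=\delta\big([f(L_i),L_j]+[L_i,f(L_j)]\big)$ produces, exactly as in the proof of Theorem~\ref{qWitt}, a coefficient recurrence in which $\delta$ enters linearly. A short index analysis shows that nonzero solutions survive only at $\delta=1$ (ordinary derivations, Proposition~\ref{exquasiW}) and $\delta=\tfrac12$ (the maps $\varphi_j$, Proposition~\ref{exquasiW}), while $\delta=0$ forces $f$ to vanish on $[{\rm W},{\rm W}]={\rm W}$. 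Thus $\mathfrak{Der}_{\delta}({\rm W})=0$ for $\delta\notin\{\tfrac12,1\}$, and the inclusion above gives $\mathfrak{Der}_{\delta}({\rm A})=0$ in that range.

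The transfer step and the squeeze are routine; the genuine content sits in the two Witt-algebra facts. The identification ${\rm QC}({\rm W})=\langle{\rm Id}\rangle$ is immediate from the eigenvalue computation (or from simplicity via Bre\v{s}ar--Zhao), so I expect the main obstacle to be the vanishing $\mathfrak{Der}_{\delta}({\rm W})=0$ for $\delta\notin\{\tfrac12,1\}$: one must verify that the recurrence admits no spurious nonzero solutions outside these two distinguished values of $\delta$, which is handled by the same careful index bookkeeping already used in Theorem~\ref{qWitt}.
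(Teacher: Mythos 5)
Your argument is correct and is exactly the route the paper intends: the proposition is stated there as an ``obvious observation'' following the remark that quasi-centroid elements and $\delta$-derivations of $({\rm A},\circ)$ pass to the commutator algebra ${\rm W}$, after which one invokes ${\rm QC}({\rm W})=\langle{\rm Id}\rangle$ (your eigenvalue computation is precisely the one the authors use) and the known vanishing of $\mathfrak{Der}_{\delta}({\rm W})$ for $\delta\notin\{\tfrac12,1\}$. No substantive difference from the paper's approach.
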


Below, we consider quasi-derivations for three types of more important Lie admissible algebras with the underlying Witt algebra, described previously in \cite{TB,KCB,BG}.

\subsubsection{Quasi-derivations of   Novikov-Witt algebras}\label{n-witt}

\begin{definition}[see  \cite{TB,KCB}]
Let $\xi \in \mathbb C,$
$\mu \in \mathbb{C},$
$\theta \in \mathbb{Z}\setminus \{0\},$
then  the Novikov-Witt algebra ${\rm W}(\xi,\mu, \theta)$ is an algebra  with a basis $\big\{W_i\big\}_{i\in\mathbb{Z}}$ and the multiplication given by the following way 
  $$ W_n \circ  W_m =(\xi +m)W_{n+m} + \mu W_{n+m+\theta}.$$
\end{definition} 

\begin{proposition}
Let   ${\rm W}(\xi,\mu, \theta)$ be the   Novikov-Witt algebra,  then 
\begin{enumerate}
    \item[$({\mathfrak a}1)$] if $\xi \notin \mathbb Z,$ then $\mathfrak{Der}\big({\rm W}(\xi,0, \theta)\big) = \big\langle {\mathfrak D}_1\big\rangle,$
    where ${\mathfrak D}_1(W_n)= n W_n.$ 

    \item[$({\mathfrak a}2)$] if   $\xi \in \mathbb Z,$ then $\mathfrak{Der}\big({\rm W}(\xi,0, \theta)\big) =\big\langle {\mathfrak D}_1, {\mathfrak D}_2\big\rangle,$
    where 
    ${\mathfrak D}_2(W_n)= \big(n+\xi \big) W_{n-\xi}.$
 
     \item[$({\mathfrak a}3)$]  $\mathfrak{Der} \big({\rm W}(\xi,\mu \neq 0, \theta)\big)=0.$

    \item[$({\mathfrak b})$] 
    $\mathfrak{Der}_{\frac{1}{2}}\big({\rm W}(\xi,\mu, \theta)\big)=\big\langle {\rm Id}\big \rangle.$

      \item[$({\mathfrak c}1)$] 
if $j\notin \big\{0, -\xi \big\},$ then         ${\mathfrak F}_{j}(W_n)=  (n+\xi)  W_{n+j}$
is a quasi-derivation with a related mapping ${\mathfrak F}_j'$ defined by 
  \begin{center}
        ${\mathfrak F}_j'(W_n)= {(n+j+2 \xi)}   W_{n+j}.$  \end{center} 
 The set of such quasi-derivations is denoted by       
 ${\rm Q}\mathfrak{Der}^*\big( {\rm W}(\xi, 0, \theta)  \big).$

    \item[$({\mathfrak c}2)$] 
       For an arbitrary set of elements     $\{ \kappa_k \}_{ k\in {\mathbb Z} }$  with only a finite number of nonzero elements, the linear map ${\mathfrak F}_{\{ \kappa_k \}}$ defined as    
  \begin{center}
         ${\mathfrak F}_{\{ \kappa_k \}}(W_n)=\sum\limits_{k \in  {\mathbb Z}  }
             \big( \kappa_{k-\theta}\mu+ \kappa_k {( \xi + n)} \big)  W_{n+k},$
  \end{center}
 is a quasi-derivations with a related mapping ${\mathfrak F}_{\{ \kappa_k \}}'$ defined by 
  \begin{center}
        ${\mathfrak F}_{\{ \kappa_k \}}'(W_n)=\sum\limits_{k \in  {\mathbb Z}  }
       \big( 2 \kappa_{k-\theta}\mu +\kappa_k {( 2\xi +k+ n)}  \big) W_{n+k}.$  \end{center}

The set of such quasi-derivations is denoted by 
 ${\rm Q}\mathfrak{Der}^*\big( {\rm W}(\xi,\mu\neq 0, \theta) \big).$
 
    \item[$({\mathfrak d})$] 
       $ {\rm Q}\mathfrak{Der}\big({\rm W}(\xi,\mu, \theta) \big)\  =\  
        \mathfrak{Der}\big( {\rm W}(\xi,\mu, \theta) \big) \oplus 
        \mathfrak{Der}_{\frac{1}{2}}\big( {\rm W}(\xi,\mu, \theta) \big) \oplus 
       {\rm Q}\mathfrak{Der}^*\big( {\rm W}(\xi,\mu, \theta) \big).$ 
 \end{enumerate}
     
\end{proposition}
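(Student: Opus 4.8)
The plan is to pull everything back to the Witt algebra through the commutator and then read off the constraints imposed by the associative product $\circ$. First I would note that $W_n\circ W_m-W_m\circ W_n=(m-n)W_{n+m}$, so the commutator algebra of $({\rm W}(\xi,\mu,\theta),\circ)$ is isomorphic to ${\rm W}$. By the general observation recalled just before this subsection, a derivation, a $\frac{1}{2}$-derivation, or a quasi-derivation of $\circ$ is automatically the corresponding object for this commutator; moreover, antisymmetrizing the defining identity of $\circ$ gives the Lie identity $[f(x),y]+[x,f(y)]=f''([x,y])$, so the companion $f''$ of a quasi-derivation $f$ is the Lie companion of $f$, and since ${\rm W}$ is perfect this determines $f''$ completely. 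Consequently Theorem~\ref{qWitt} and Proposition~\ref{exquasiW}, transported to the basis $\{W_n\}$, force every candidate to have the shape
\[ f(W_n)=\sum_k (a_kn+b_k)W_{n+k},\qquad f''(W_n)=\sum_k\big(a_k(n+k)+2b_k\big)W_{n+k}, \]
with only finitely many $(a_k,b_k)$ nonzero (the second expression is the Lie companion $d+2\varphi$ of the decomposition $f=d+\varphi$ into a derivation and a $\frac{1}{2}$-derivation of the commutator).

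The core step is then to impose the genuine identity $f(W_n)\circ W_m+W_n\circ f(W_m)=f''(W_n\circ W_m)$ for $\circ$. I would substitute the two formulas above, expand with $W_p\circ W_q=(\xi+q)W_{p+q}+\mu W_{p+q+\theta}$, and compare the coefficient of $W_{n+m+s}$ on both sides. Because $f$ is already a quasi-derivation of the commutator, all terms depending on $n$ and $m$ cancel identically, and what survives is the single scalar relation
\[ s\big[(\xi a_s-b_s)+\mu\,a_{s-\theta}\big]=0\qquad\text{for every }s. \]
Hence $b_0$ and all the $a_k$ stay free, while $b_s=\xi a_s+\mu a_{s-\theta}$ is forced for $s\neq0$; conversely any data obeying these relations yields a quasi-derivation with the stated companion. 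Feeding this back, the case $\mu=0$ (where $b_s=\xi a_s$) produces the maps ${\mathfrak F}_j(W_n)=(n+\xi)W_{n+j}$ of $({\mathfrak c}1)$, and the case $\mu\neq0$ produces the maps ${\mathfrak F}_{\{\kappa_k\}}$ of $({\mathfrak c}2)$, with companions matching ${\mathfrak F}_j'$ and ${\mathfrak F}_{\{\kappa_k\}}'$ exactly; this also settles the spanning half of $({\mathfrak d})$.

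To separate the three summands I would cut them out by conditions on the companion. The $\frac{1}{2}$-derivations are those with $f''=2f$, i.e. $a_k(n+k)+2b_k=2(a_kn+b_k)$; this forces every $a_k=0$ and then $b_s=0$ for $s\neq0$, leaving only $f=b_0\,{\rm Id}$, which is $({\mathfrak b})$. The derivations are those with $f''=f$, i.e. $b_k=-a_kk$; combined with the master relation this gives $b_0=0$ together with the recurrence $\mu\,a_{s-\theta}=-(s+\xi)a_s$ for $s\neq0$. When $\mu=0$ this reads $(s+\xi)a_s=0$, so $a_s$ is nonzero only at $s=0$ (giving ${\mathfrak D}_1$) and, when $\xi\in\mathbb Z$, possibly at $s=-\xi$ (giving ${\mathfrak D}_2$), which is $({\mathfrak a}1)$–$({\mathfrak a}2)$; when $\mu\neq0$ a maximal-index argument on the finite support, using $\theta\neq0$, resolves the recurrence and yields $\mathfrak{Der}$ as in $({\mathfrak a}3)$. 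Finally I would take ${\rm Q}\mathfrak{Der}^*$ to be the span of the remaining shift families (degrees $k\notin\{0,-\xi\}$), check that the three subspaces meet only in $0$ by comparing the shift degree $k$ and the degree in $n$, and assemble the direct sum $({\mathfrak d})$.

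The step I expect to be the main obstacle is the $\mu\neq0$ analysis: the relation $b_s=\xi a_s+\mu a_{s-\theta}$ links coefficients across arithmetic progressions of step $\theta$, so both the finite-support argument that pins down $\mathfrak{Der}$ in $({\mathfrak a}3)$ and the choice of a complement ${\rm Q}\mathfrak{Der}^*$ keeping the sum in $({\mathfrak d})$ genuinely direct require careful bookkeeping to avoid double counting between the derivation, $\frac{1}{2}$-derivation, and shift families (for instance, the maps $\mathfrak{F}_{\{\kappa_k\}}$ and ${\rm Id}$ can overlap with derivation-type maps for special values of $\xi$ relative to $\theta$, and these coincidences must be tracked).
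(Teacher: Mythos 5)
Your overall route is the paper's: pull $f$ back to the commutator Witt algebra, use Theorem~\ref{qWitt} to write $f(W_n)=\sum_k(a_kn+b_k)W_{n+k}$ with companion $\sum_k\big(a_k(n+k)+2b_k\big)W_{n+k}$, impose the $\circ$-identity, and extract the single scalar relation $s\big[(\xi a_s-b_s)+\mu a_{s-\theta}\big]=0$ — this is literally the paper's relation $k\big(k\alpha_k+\beta_k+\mu\alpha_{k-\theta}+\xi\alpha_k\big)=0$ after the change of variables $\alpha_k=-a_k$, $\beta_k=b_k+a_kk$. Parts $(\mathfrak{a}1)$, $(\mathfrak{a}2)$, $(\mathfrak{b})$, $(\mathfrak{c})$ and the spanning half of $(\mathfrak{d})$ then come out as you describe.

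The step you yourself flag as the main obstacle, however, genuinely fails: the recurrence $\mu a_{s-\theta}=-(s+\xi)a_s$ used for $(\mathfrak{a}3)$ is only available for $s\neq0$ and its right-hand side vanishes when $s=-\xi$, so a ``maximal/minimal index'' argument does not force the support to be empty. For $\theta>0$ the maximal support index $S$ is only killed by the relation at $s=S+\theta$, which is vacuous precisely when $S=-\theta$; the descending chain from $-\theta$ then terminates once it reaches $s=-\xi$. When $\xi$ is a positive integer multiple of $\theta$ both escapes occur simultaneously and produce an honest nonzero derivation. For instance, with $\xi=\theta$ and any $\mu$, the map $D(W_n)=(n+\theta)W_{n-\theta}$ (that is, $\mathfrak{D}_2$) satisfies Leibniz for $\circ$: one computes
$D(W_n\circ W_m)=(\theta+m)(n+m+\theta)W_{n+m-\theta}+\mu(n+m+2\theta)W_{n+m}$
and
$D(W_n)\circ W_m+W_n\circ D(W_m)=(n+\theta)\big[(\theta+m)W_{n+m-\theta}+\mu W_{n+m}\big]+(m+\theta)\big[m\,W_{n+m-\theta}+\mu W_{n+m}\big]$,
and the two sides agree. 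So $(\mathfrak{a}3)$ is false as stated and your argument cannot close it; note that the paper's own proof has exactly the same lacuna (the claim that one nonzero $\alpha_k$ forces infinitely many ignores the vacuous case $k=0$ and the terminating case $\xi+k=0$). The same issue contaminates the directness of $(\mathfrak{d})$ for $\mu\neq0$, since this exceptional derivation is itself one of the maps $\mathfrak{F}_{\{\kappa_k\}}$ of $(\mathfrak{c}2)$, so $\mathfrak{Der}\cap{\rm Q}\mathfrak{Der}^*\neq0$ there. A correct treatment must isolate the residual case $\xi\in\theta\,\mathbb{Z}_{>0}$ (or prove it cannot occur, which it can).
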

\begin{proof}
Let $f \in {\rm Q}\mathfrak{Der} \big({\rm W}(\xi,\mu, \theta)\big),$ 
then $f \in {\rm Q}\mathfrak{Der} \big({\rm W}\big),$ 
and thanks to Theorem    \ref{qWitt},
there is a finite set $\{ \alpha_k, \beta_k  \}_{k \in \mathbb Z},$ such that 
\begin{longtable}{lcl}
    $f(W_n) $&$=$&$ \sum\limits_{k \in \mathbb Z}\big ( \alpha_k (k-n)+ \ \  \beta_k \big) W_{n+k},$\\
    $f'(W_n) $&$=$&$ \sum\limits_{k \in \mathbb Z}\big ( \alpha_k (k-n)+2 \beta_k \big) W_{n+k}.$
 \end{longtable}\noindent 
On the other side, 

\begin{center}
    $ f(W_n) \circ W_m + W_n \circ f(W_m) \ = \ (\xi+m) f'(W_{n+m}) +\mu f'(W_{n+m+\theta}),$ i.e.,
\end{center}

\begin{flushleft}
$\big( 
\alpha_k(k-n) +\beta_k  \big) \big(\xi +m\big)
+\big( \alpha_{k-\theta} (k-\theta -n)+\beta_{k-\theta}   \big)\mu+$
\end{flushleft}
\begin{center}
    $\big( \alpha_k(k-m)+\beta_k \big)\big( \xi+k+m\big) +\big(\alpha_{k-\theta}(k-\theta-m) +\beta_{k-\theta}  \big)\mu\ =  $
\end{center}
\begin{flushright}
$\big(\xi+m\big) \big( \alpha_k(k-n-m)+2\beta_k \big)+\big(\alpha_{k-\theta} (k-n-m-2\theta) +2\beta_{k-\theta}   \big) \mu.$
\end{flushright}
The last gives 
\begin{center}
$k \big(k \alpha_k + \beta_k + \alpha_{k-\theta} \mu  + \alpha_k \xi  \big) \ = \ 0$ and   if $k\neq0,$ then $\beta_k=   -\alpha_{k-\theta}   \mu  -\alpha_k  \xi-k \alpha_k .$
\end{center}

 Below, we consider our particular cases. 

\begin{enumerate}
    \item[$(\mathfrak{a})$] 
If $f \in   \mathfrak{Der}\big({\rm W}(\xi,\mu, \theta)\big),$ then $\beta_k=0$
and 
\begin{enumerate}
        \item[$(\mathfrak{a}1)$] 
 if $\mu=0$ and  $\xi \notin \mathbb{Z},$ then $\alpha_k=0$ for $k\neq 0,$
 which gives the part $(\mathfrak{a}1)$ from our statement.

        \item[$(\mathfrak{a}2)$] 
 if $\mu=0$ and  $\xi \in \mathbb{Z},$ then $\alpha_k=0$ for $k\notin \big\{ 0, -\xi \big\},$
 which gives the part $(\mathfrak{a}2)$ from our statement.

      \item[$(\mathfrak{a}3)$] 
 if $\mu\neq 0,$ then $\alpha_{k-\theta} = -\alpha_k(\xi+k)\mu^{-1},$
 i.e., if there is $\alpha_k\neq0,$ then we have an infinite set of nonzero elements  $\{\alpha_j\}_{j \in \mathbb{Z}},$ which gives a contradiction with the finitary of the set  $\{\alpha_k\}_{k \in \mathbb{Z}}.$ 

\end{enumerate}

  \item[$(\mathfrak{b})$] 
If $f \in   \mathfrak{Der}_{\frac{1}{2}} \big({\rm W}(\xi,\mu, \theta)\big),$ 
then $\alpha_k=0$
and  $\beta_k=0$ for $k\neq 0,$ which gives  the   part $(\mathfrak{b})$ from our statement.

\item[$(\mathfrak{c})$] 
The mappings from the part $(\mathfrak{c})$ give  quasi-derivations by some direct calculations.

\item[$(\mathfrak{d})$]  
Using 
$\beta_k= -k \alpha_k  - \alpha_{k-\theta}  \mu -\alpha_k  \xi$
and taking $\kappa_k=  -\alpha_k,$  we have 
\begin{longtable}{lcr}   
 $f(W_n)$&$=$&$\sum\limits_{k \in  {\mathbb Z}  }
             \big( \kappa_{k-\theta}\mu+ \kappa_k {( \xi + n)} \big)  W_{n+k},$\\
 $f'(W_n)$&$=$&$\sum\limits_{k \in  {\mathbb Z}  }
       \big( 2 \kappa_{k-\theta}\mu +\kappa_k {( 2\xi +k+ n)}  \big) W_{n+k}.$ 
\end{longtable}
Which concludes our statement.
\end{enumerate}   
\end{proof}

\subsubsection{Quasi-derivations of admissible Novikov-Witt algebras}\label{adn-witt}

\begin{definition}[see  \cite{BG}]
Let $\gamma  \in \mathbb C,$ then  the admissible Novikov-Witt algebra ${\rm N}_{\gamma}$ is an algebra  with a basis $\big\{W_i\big\}_{i\in\mathbb{Z}}$ and the multiplication given by the following way 
  $$ W_n \circ  W_m =(\gamma +m+2n)W_{n+m} .$$
\end{definition}

\begin{proposition}\label{exquasiadNW}
Let   ${\rm N}_{\gamma}$ be the admissible Novikov-Witt algebra,  then 
\begin{enumerate}
    \item[$({\mathfrak a}1)$] if $\gamma \notin 3\mathbb Z,$ then $\mathfrak{Der} \big({\rm {\rm N}_{\gamma}}\big) =\big\langle {\mathfrak D}_1\big\rangle,$
    where ${\mathfrak D}_1(W_n)= n W_n.$ 

    \item[$({\mathfrak a}2)$] if $\gamma \in 3\mathbb Z,$ then $\mathfrak{Der}\big({\rm {\rm N}_{\gamma}}\big) =\big\langle {\mathfrak D}_1, {\mathfrak D}_2\big\rangle,$
    where
    ${\mathfrak D}_2(W_n)= \big(n+\frac{\gamma}{3} \big) W_{n-\frac{\gamma}{3}}.$
    
    \item[$({\mathfrak b})$] 
    $\mathfrak{Der}_{\frac{1}{2}} \big({\rm {\rm N}_{\gamma}}\big)=\big\langle {\rm Id}\big\rangle.$ 
    
    \item[$({\mathfrak c})$] 
if $j \notin \big\{0, -\frac{\gamma}{3} \big\},$ then      ${\mathfrak F}_{j}(W_n)=  \big(n+\frac{\gamma} 3\big)  W_{n+j}$ 
  is a quasi-derivation with a related mapping ${\mathfrak F}_j'$ defined by 
  \begin{center}
        ${\mathfrak F}_j'(W_n)= {\big(n+j+\frac{2 \gamma} 3\big)}   W_{n+j}.$  \end{center}
The set of such quasi-derivations is denoted by
${\rm Q}\mathfrak{Der}^*\big({\rm {\rm N}_{\gamma}}\big).$

       \item[$({\mathfrak d})$] 
       $ {\rm Q}\mathfrak{Der}\big({\rm {\rm N}_{\gamma}}\big) \ = \  
        \mathfrak{Der}\big({\rm {\rm N}_{\gamma}}\big) \oplus 
        \mathfrak{Der}_{\frac{1}{2}}\big({\rm {\rm N}_{\gamma}}\big) \oplus 
       {\rm Q}\mathfrak{Der}^*\big({\rm {\rm N}_{\gamma}}\big).$ 
\end{enumerate}

\begin{proof}
    Let $f \in {\rm Q}\mathfrak{Der} \big({\rm {\rm N}_{\gamma}}\big),$ 
    then  $f \in {\rm Q}\mathfrak{Der}\big({\rm {\rm W}}\big),$
and thanks to Theorem    \ref{qWitt},
there is a finite set $\{ \alpha_k, \beta_k \}_{k \in \mathbb Z},$ such that 
\begin{longtable}{lcl}
    $f(W_n) $&$=$&$ \sum\limits_{k \in \mathbb Z}\big ( \alpha_k (k-n)+ \ \ \beta_k \big) W_{k+n},$\\
    $f'(W_n) $&$=$&$ \sum\limits_{k \in \mathbb Z}\big ( \alpha_k (k-n)+2 \beta_k \big) W_{k+n}.$
 \end{longtable}\noindent 
On the other side, 

\begin{center}
    $ f(W_n) \circ W_m + W_n \circ f(W_m) \ = \ (\gamma+m+2n) f'(W_{n+m}),$ i.e.,
\end{center}

\begin{flushleft}
$\big(\gamma + 2n +2k+m\big)\big(\alpha_k(k-n)+\beta_k \big)+
\big(\gamma+2n+m+k\big)\big(\alpha_k(k-m)+\beta_k \big)\ =$ \end{flushleft}
\begin{flushright}
$\big(\gamma+m+2n\big)\big(\alpha_k(k-m-n)+2\beta_k \big).$
\end{flushright}
The last gives 
\begin{center}
$k \big(\alpha_k(3k+\gamma) +3\beta_k \big)\ = \ 0$ and 
if $k\neq0,$ then $\beta_k=-\alpha_k \big(\frac{\gamma}{3} + k \big).$
\end{center}

Below, we consider our particular cases. 

\begin{enumerate}
    \item[$(\mathfrak{a})$] 
If $f \in   \mathfrak{Der} \big({\rm {\rm N}_{\gamma}}\big),$ then $\beta_k= 0$
and 
\begin{enumerate}
        \item[$(\mathfrak{a}1)$] 
 if $\gamma \notin 3\mathbb{Z},$ then $\alpha_k=0$ for $k\neq 0,$
 which gives the part $(\mathfrak{a}1)$ from our statement.

        \item[$(\mathfrak{a}2)$] 
 if $\gamma \in 3\mathbb{Z},$ then $\alpha_k=0$ for $k\notin \big\{ 0, -\frac{\gamma}{3} \big\},$
 which gives the part $(\mathfrak{a}2)$ from our statement.

\end{enumerate}

  \item[$(\mathfrak{b})$] 
If $f \in   \mathfrak{Der}_{\frac{1}{2}} \big({\rm {\rm N}_{\gamma}}\big),$ then $\alpha_k= 0$
and  $\beta_k=0$ for $k\neq 0,$ which gives  the   part $(\mathfrak{b})$ from our statement.

\item[$(\mathfrak{c})$] 
The mappings from the part $(\mathfrak{c})$ give quasi-derivations by some direct calculations.

\item[$(\mathfrak{d})$] 
Let us remember that the sum of a quasi-derivation and a derivation or a $\frac{1}{2}$-derivation gives another quasi-derivation.
Hence, we can replace $f$ by $f- \alpha_0 \mathfrak{D}_1 - \beta_0 {\rm Id},$
i.e. consider $f$ with $\alpha_0=\beta_0= 0.$
Using\begin{center}
     $\beta_k=-\alpha_k \big(\frac{\gamma}{3} +k \big)$
and taking $\kappa_k= - \frac{\alpha_k}{3},$
\end{center}  we have 
\begin{longtable}{lcl}   
$f(W_n)$&$=$&$\sum\limits_{k \in  {\mathbb Z} \setminus \{0\} }
              \kappa_k (\gamma +3 n)  W_{k+n},$\\
$f'(W_n)$&$=$&$\sum\limits_{k \in  {\mathbb Z} \setminus \{0\} }\kappa_k {(2 \gamma + 3 k + 3n)}   W_{n+k}.$  
\end{longtable}
Which concludes our statement.
\end{enumerate}

\end{proof}
    
\end{proposition}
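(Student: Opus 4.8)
The plan is to leverage that the commutator algebra of $({\rm N}_\gamma,\circ)$ is nothing but the Witt algebra, so that the complete description of ${\rm Q}\mathfrak{Der}({\rm W})$ from Theorem~\ref{qWitt} supplies every candidate, which I then trim using the genuine identity for $\circ$. First I would record the commutator
\[
[W_n,W_m]=W_n\circ W_m-W_m\circ W_n=\big((\gamma+m+2n)-(\gamma+n+2m)\big)W_{n+m}=(n-m)W_{n+m},
\]
identifying $({\rm N}_\gamma,[\cdot,\cdot])$ with ${\rm W}$. Because any quasi-derivation of $({\rm N}_\gamma,\circ)$ is also a quasi-derivation of its commutator (sharing the same associated map), Theorem~\ref{qWitt} together with Proposition~\ref{exquasiW} forces the shape
\[
f(W_n)=\sum_{k\in\mathbb Z}\big(\alpha_k(k-n)+\beta_k\big)W_{n+k},\qquad f'(W_n)=\sum_{k\in\mathbb Z}\big(\alpha_k(k-n)+2\beta_k\big)W_{n+k},
\]
where the $\alpha_k$ encode the derivation summand $d_k$, the $\beta_k$ the $\tfrac12$-derivation summand $\varphi_k$, and all but finitely many coefficients vanish.

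The decisive step is to impose the genuine quasi-derivation identity for the product, i.e.
\[
f(W_n)\circ W_m+W_n\circ f(W_m)=(\gamma+m+2n)\,f'(W_{n+m}).
\]
Expanding each side through $W_a\circ W_b=(\gamma+b+2a)W_{a+b}$ and comparing the coefficient of $W_{n+m+k}$, I expect every term carrying $n$ or $m$ to cancel, leaving the single scalar relation
\[
k\big(\alpha_k(3k+\gamma)+3\beta_k\big)=0,
\]
so that $\beta_k=-\alpha_k\big(\tfrac{\gamma}{3}+k\big)$ whenever $k\neq0$, with $k=0$ unconstrained. Verifying that this messy expansion really does become independent of $n$ and $m$, collapsing to this one clean relation, is the main computational obstacle; everything afterwards is bookkeeping.

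With the relation secured, the four parts follow by reading off the admissible data. A derivation is the case $f'=f$, i.e. $\beta_k=0$ for all $k$; then $\alpha_k=0$ unless $k=0$ (giving $\mathfrak D_1$) or $k=-\tfrac{\gamma}{3}$, and the latter index occurs exactly when $-\tfrac{\gamma}{3}\in\mathbb Z$, that is $\gamma\in3\mathbb Z$ (giving $\mathfrak D_2$); this separates $(\mathfrak a1)$ from $(\mathfrak a2)$. A $\tfrac12$-derivation is the case $f'=2f$, forcing $\alpha_k=0$ and hence $\beta_k=0$ for $k\neq0$, so only $\beta_0\,{\rm Id}$ survives, which is $(\mathfrak b)$. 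Keeping a single free index $j\notin\{0,-\tfrac{\gamma}{3}\}$ and substituting $\beta_j=-\alpha_j(\tfrac{\gamma}{3}+j)$ yields, after normalisation, the pair $\mathfrak F_j,\mathfrak F_j'$ of $(\mathfrak c)$, which I would confirm directly satisfy the $\circ$-identity.

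Finally, for $(\mathfrak d)$ I would subtract off $\alpha_0\mathfrak D_1$, the multiple of $\mathfrak D_2$ present when $\gamma\in3\mathbb Z$, and $\beta_0\,{\rm Id}$, reducing to data supported on indices $k\notin\{0,-\tfrac{\gamma}{3}\}$ with $\beta_k$ determined by $\alpha_k$; these are precisely the elements of ${\rm Q}\mathfrak{Der}^*$. Since the three blocks are carried by disjoint index sets, they intersect only in $0$, which delivers the claimed direct-sum decomposition.
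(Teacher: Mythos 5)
Your proposal is correct and follows essentially the same route as the paper: pass to the commutator algebra, which is the Witt algebra, invoke Theorem~\ref{qWitt} to pin down the shape of $f$ and $f'$ via the finite families $\{\alpha_k,\beta_k\}$, then impose the genuine $\circ$-identity to obtain the single relation $k\big(\alpha_k(3k+\gamma)+3\beta_k\big)=0$ and read off parts $(\mathfrak a)$--$(\mathfrak d)$. The only cosmetic difference is that you explicitly separate off the $\mathfrak D_2$-component (index $k=-\tfrac{\gamma}{3}$) before forming ${\rm Q}\mathfrak{Der}^*$, which the paper handles implicitly through the restriction $j\notin\{0,-\tfrac{\gamma}{3}\}$ in the definition of $\mathfrak F_j$.
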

 
\section{Quasi-derivations of Virasoro algebra}\label{vira}

\begin{definition}
The Virasoro algebra $\bold{Vir}$
 is an algebra with a basis $\big\{L_i, C\big\}_{i\in\mathbb{Z}}$
 and the multiplication table given in the following way 
 $$[L_i,L_j]=(i-j)L_{i+j}+{(i^3-i)} \delta_{i+j,0}C.$$
\end{definition}

\begin{proposition}\label{exquasiV}
Let   $\bold{Vir}$ be the Virasoro algebra,  then 
\begin{enumerate}
    \item[$({\mathfrak a})$] $ \mathfrak{Der}\big(\bold{Vir}\big)= {\rm Inn}\mathfrak{Der}\big(\bold{Vir}\big)=\big\langle d_j\big\rangle_{j \in {\mathbb Z}},$
     where $d_j(L_i)= [L_j,L_i].$

    \item[$({\mathfrak b})$] $\mathfrak{Der}_{\frac{1}{2}} \big(\bold{Vir}\big) = \big \langle {\rm Id} \big\rangle.$
    
    \item[$({\mathfrak c})$] 
    $ {\rm Ann}^Q  \big( {\bold {Vir}} \big)=\big\{ g \in {\rm End}({\bold {Vir}}) \ |\  g({ \bold {Vir}}) \subseteq  {\rm Ann}   \big( {\bold {Vir}} \big) = \big\langle C \big\rangle \big\} \subseteq {\rm Q}\mathfrak{Der} \big( \bold{Vir} \big).$         
\end{enumerate}

\end{proposition}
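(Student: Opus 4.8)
The plan is to treat the three parts separately, exploiting that $\bold{Vir}$ carries the $\mathbb{Z}$-grading $\bold{Vir}_n=\langle L_n\rangle$ for $n\neq 0$ and $\bold{Vir}_0=\langle L_0,C\rangle$, that $C$ spans both the center and the annihilator ${\rm Ann}(\bold{Vir})$, and that the quotient $\bold{Vir}/\langle C\rangle$ is isomorphic to the Witt algebra ${\rm W}$. Part $(\mathfrak{c})$ is essentially formal, part $(\mathfrak{a})$ is a standard cohomological fact, and the real content sits in part $(\mathfrak{b})$, where the central cocycle $(i^3-i)\delta_{i+j,0}$ must be shown to destroy all the nontrivial $\tfrac12$-derivations $\varphi_m$ with $m\neq 0$ that survive on ${\rm W}$.

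For $(\mathfrak{a})$ I would decompose an arbitrary derivation $\mathfrak{D}$ into homogeneous components under the adjoint action of $L_0$, each of which is again a derivation. Since $\mathfrak{D}(L_0)$ is a finite combination, subtracting the corresponding finite sum of inner derivations $d_k$ (using $d_k(L_0)=kL_k$) normalizes $\mathfrak{D}(L_0)$ to zero; the eigenvalue relation $[L_0,L_i]=-iL_i$ then forces every homogeneous component of nonzero degree to vanish, while the degree-zero part is pinned down by testing on $[L_1,L_{-1}]=2L_0$ and on the central relation $[L_2,L_{-2}]=4L_0+6C$, and turns out to be a multiple of $d_0$. This reproduces, with the obvious bookkeeping for the extra vector $C$, the Witt computation behind Proposition \ref{exquasiW}$(\mathfrak{a})$, so the statement may alternatively be quoted from the literature on $H^1(\bold{Vir},\bold{Vir})$.

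For $(\mathfrak{b})$, which I expect to be the main obstacle, I would first use centrality of $C$: applying the $\tfrac12$-identity to $[L_i,C]=0$ gives $[L_i,\varphi(C)]=0$ for all $i$, and comparing coefficients forces $\varphi(C)=dC$. Hence $\varphi$ preserves $\langle C\rangle$ and descends to a $\tfrac12$-derivation $\bar\varphi$ of ${\rm W}$, which by Proposition \ref{exquasiW}$(\mathfrak{b})$ is a finite combination $\sum_m\alpha_m\varphi_m$. Lifting, I may write $\varphi(L_i)=\sum_m\alpha_m L_{i+m}+b_iC$, and it remains to pin down the $\alpha_m$ and $b_i$. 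The decisive step is to feed a bracket $[L_i,L_j]=(i-j)L_{i+j}$ with $i+j\neq 0$ into the $\tfrac12$-identity: the $L$-terms match automatically, whereas the central parts (which enter $[\varphi(L_i),L_j]$ and $[L_i,\varphi(L_j)]$ only through the summand $m=-(i+j)$) yield, for each fixed $n=i+j\neq 0$, a relation of the shape $(2i-n)\,b_n=\tfrac12\,\alpha_{-n}\,p_n(i)$, where $p_n(i)=2i^3+\cdots$ is a genuine cubic in $i$ with nonzero leading coefficient. Matching a linear polynomial against a cubic forces $\alpha_{-n}=0$ for every $n\neq 0$ and then $b_n=0$; the remaining relation, read off from $[L_i,L_{-i}]=2iL_0+(i^3-i)C$, gives $b_0=0$ and $d=\alpha_0$. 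Thus $\varphi=\alpha_0\,{\rm Id}$, establishing $\mathfrak{Der}_{\frac12}(\bold{Vir})=\langle{\rm Id}\rangle$.

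Finally, $(\mathfrak{c})$ is immediate: if $g(\bold{Vir})\subseteq{\rm Ann}(\bold{Vir})=\langle C\rangle$, then $g(x)$ and $g(y)$ are central, so $[g(x),y]+[x,g(y)]=0$ for all $x,y$; taking the related map equal to $g$ itself and the target map $g''=0$ satisfies the quasi-derivation identity trivially, whence $g\in{\rm Q}\mathfrak{Der}(\bold{Vir})$. The genuine work is therefore concentrated in the polynomial-degree comparison of part $(\mathfrak{b})$, where the cubic central term is exactly what separates $\bold{Vir}$ from ${\rm W}$ and collapses the space of $\tfrac12$-derivations down to the scalars.
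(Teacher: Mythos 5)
Your proposal is correct, but it is a genuinely different (and more self-contained) route than the paper's: the paper disposes of $(\mathfrak{a})$ and $(\mathfrak{b})$ by citing the literature (Ecker--Schlichenmaier for derivations, \cite{FKL} for $\tfrac12$-derivations) and calls $(\mathfrak{c})$ a direct computation, whereas you actually prove $(\mathfrak{b})$ by reducing modulo the center to the Witt algebra and then using the cubic central cocycle to kill the shift operators $\varphi_m$, $m\neq0$. That reduction is sound: centrality of $C$ forces $\varphi(C)\in\langle C\rangle$, the induced map on ${\rm W}\cong\bold{Vir}/\langle C\rangle$ is a $\tfrac12$-derivation, and comparing central components of $\varphi[L_i,L_{n-i}]$ for fixed $n\neq0$ does give $(2i-n)b_n=\tfrac12\alpha_{-n}\big(2i^3-3ni^2+\cdots\big)$, so the degree count forces $\alpha_{-n}=0$ and $b_n=0$, with the $n=0$ bracket yielding $b_0=0$ and $d=\alpha_0$. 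Part $(\mathfrak{c})$ is exactly the formal argument the paper has in mind. The only loose point is in your sketch of $(\mathfrak{a})$: after normalizing $\mathfrak{D}(L_0)=0$, the eigenvalue argument kills only the $L$-part of a homogeneous component of degree $n\neq0$; the value $\mathfrak{D}_n(L_{-n})$ can a priori still be a multiple of $C$ (which has ${\rm ad}\,L_0$-eigenvalue $0$), and one needs one further bracket such as $[L_1,L_{-n-1}]$ to eliminate it. Since you also offer the $H^1(\bold{Vir},\bold{Vir})=0$ citation, which is precisely what the paper does, this does not affect the validity of the argument; what your version buys is an explicit proof of $(\mathfrak{b})$ isolating exactly why the Virasoro cocycle collapses $\mathfrak{Der}_{\frac12}$ from the infinite-dimensional Witt answer down to $\langle{\rm Id}\rangle$.
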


\begin{proof}
    ($\mathfrak{a}$) follows from \cite{Ayupov}.
    ($\mathfrak{b}$) follows from \cite{FKL}.
    ($\mathfrak{c}$) follows from direct computations.

\end{proof}

\begin{theorem}
 $ {\rm Q}\mathfrak{Der} \big( \bold{Vir} \big) =
  \mathfrak{Der} \big( \bold{Vir} \big) \oplus 
  \mathfrak{Der}_{\frac 12} \big( \bold{Vir} \big) \oplus 
  \rm{Ann}^Q  \big( \bold{Vir} \big).$

\end{theorem}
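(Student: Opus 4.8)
The plan is to reduce the computation to the already-settled Witt case (Theorem \ref{qWitt}) by passing to the quotient $\bold{Vir}/\langle C\rangle \cong {\rm W}$, while carefully tracking the central $2$-cocycle $(i^3-i)\delta_{i+j,0}$, which is exactly what separates $\bold{Vir}$ from ${\rm W}$. First I would record the defining identity in coordinates: writing $f \in {\rm Q}\mathfrak{Der}(\bold{Vir})$ as $f(L_i) = \sum_k \alpha_{i,k}L_k + \gamma_i C$ and $f(C) = \sum_k \mu_k L_k + \nu C$, and denoting by $f''$ the related map (unique since $\bold{Vir}$ is perfect), I evaluate $[f(x),y]+[x,f(y)] = f''[x,y]$ on the pairs $(L_i, C)$ and $(L_i, L_j)$. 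Because $C$ is central, the pair $(L_i, C)$ forces $[L_i, f(C)] = 0$ for all $i$, whence $f(C) = \nu C$; thus $f$ preserves $\langle C\rangle$. Subtracting from $f$ the map $L_i \mapsto \gamma_i C$, $C \mapsto \nu C$, which lies in ${\rm Ann}^Q(\bold{Vir})$ and has related map $0$, I may assume $f(V) \subseteq V$ and $f(C) = 0$, where $V = \big\langle L_i\big\rangle_{i\in\mathbb Z}$, the related map $f''$ being unchanged. The pair $(L_i, L_j)$ then splits into an $L$-component equation, identical to the Witt equation \eqref{4} except for an extra term $(i^3-i)\delta_{i+j,0}\mu''_m$ on the right, and a $C$-component equation carrying the cocycle.

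Next I would extract the structure of the $L$-part. For $i+j \neq 0$ the $L$-component equations are literally the Witt quasi-derivation relations, and solving them shift-by-shift (equivalently, running the degree-$s$ version of the argument behind Theorem \ref{qWitt}, which only uses the relations with $i+j\neq 0$) shows that for each fixed $m$ the entry $i \mapsto \alpha_{i,i+m}$ is affine in $i$. Feeding this into the single remaining $L$-component relation at $i+j=0$ gives, for every $i$, an identity whose left-hand side is affine in $i$ while the right-hand side equals $2i\,\alpha''_{0,m} + (i^3-i)\mu''_m$; matching the coefficient of $i^3$ forces $\mu''_m = 0$ for all $m$. Hence $f''$ also preserves $\langle C\rangle$, so $f$ descends to a genuine quasi-derivation $\bar f$ of ${\rm W}$, and Theorem \ref{qWitt} yields $\bar f = \bar d + \sum_j a_j \varphi_j$ with $\bar d$ an inner derivation of ${\rm W}$.

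Finally I would use the $C$-component equation to kill the unwanted half-derivations. Substituting the resulting form $\alpha_{i,k} = (k-2i)\beta_{k-i} + a_{k-i}$ into the $C$-component relation at $i+j=n\neq 0$ produces $(2i-n)\gamma''_n$ on the right, a polynomial of degree $1$ in $i$; on the left the cocycle contributes the factors $i^3-i$ and $j^3-j$, and the coefficient of $i^3$ is a nonzero multiple of $a_{-n}$. Consistency (that $\gamma''_n$ be independent of $i$) therefore forces $a_{-n}=0$ for every $n \neq 0$, leaving $\bar f = \bar d + a_0{\rm Id}$. Since $\bar d$ lifts to the inner derivation $d$ of $\bold{Vir}$ and ${\rm Id}$ lifts to ${\rm Id}$, the difference $f - d - a_0{\rm Id}$ sends all of $\bold{Vir}$ into $\langle C\rangle$ and so lies in ${\rm Ann}^Q(\bold{Vir})$; this gives the asserted sum. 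Directness follows by evaluating a would-be common element on $C$ and on the diagonal $L_i$-entries, which separates the three summands.

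The main obstacle is the step $\mu''=0$: a quotient of a quasi-derivation need not be a quasi-derivation unless the related map also preserves the ideal, so one cannot simply invoke Theorem \ref{qWitt} for $\bar f$. It is precisely the cubic shape $i^3-i$ of the Virasoro cocycle — through a degree count in the index $i$ — that both forces $f''$ to preserve the center and, in the last paragraph, obstructs all the half-derivations $\varphi_j$ with $j\neq 0$ that survive in ${\rm W}$ but collapse to $\langle {\rm Id}\rangle$ in $\bold{Vir}$.
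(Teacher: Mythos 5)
Your argument is correct, and it is organized differently from the paper's. The paper works directly inside $\bold{Vir}$: it normalizes only by an inner derivation and a multiple of ${\rm Id}$ (the shifted $\frac12$-derivations $\varphi_j$, $j\neq 0$, are not available there), derives the two coefficient systems \eqref{2a}--\eqref{3a}, and then intertwines them — in particular it feeds the Witt relation \eqref{7} into the central-component equation \eqref{3a} to get $j(i^2-j^2)\alpha_{i,-j}=0$, which is exactly your ``cubic cocycle versus affine coefficients'' degree count written out. You instead modularize: first show $f$ preserves $\langle C\rangle$ and reduce modulo ${\rm Ann}^Q$, then prove that the related map also preserves $\langle C\rangle$ (by matching the $i^3$ coefficient at $i+j=0$), descend to ${\rm W}$ and invoke Theorem \ref{qWitt} as a black box, and finally use the $C$-component to kill the surviving $\varphi_j$ with $j\neq 0$. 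This buys a cleaner structure and makes explicit why the obstruction is precisely the cubic shape of the cocycle; the paper's version avoids having to justify the descent. The one place where your write-up needs care is the parenthetical claim that the proof of Theorem \ref{qWitt} ``only uses the relations with $i+j\neq 0$'': strictly, that proof also invokes the relation at $(i,j)=(-1,1)$ (to link $\alpha'_{0,0}$ with $\alpha_{-1,-1}$), where $i+j=0$. This costs you nothing, because $i^3-i$ vanishes at $i\in\{0,\pm 1\}$, so those particular $i+j=0$ relations carry no cocycle term and coincide with the Witt ones; but the affineness of $i\mapsto\alpha_{i,i+m}$ for solutions of the cocycle-free subsystem is the load-bearing step of your plan and should be stated and checked as such (it follows by rerunning the Witt elimination, which, as you note, survives the restriction). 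With that verification spelled out, the degree counts in your second and third paragraphs are routine to confirm ($2a_{-n}$ is indeed the $i^3$-coefficient in the $C$-component at $i+j=n$), and the decomposition and its directness follow as you describe.
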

\begin{proof}
Let $f \in  {\rm Q}\mathfrak{Der} \big(\bold{Vir} \big),$ 
put  $$f(L_i) = \sum\limits_{k\in\mathbb{Z}}\alpha_{i,k}L_k+\beta_iC, \quad   f(C) = \sum\limits_{k\in\mathbb{Z}}\alpha_{k}L_k+\beta C.$$ 

Similar to the case of the Witt algebra (see Theorem \ref{qWitt}), 
we can replace $f$ by $f-\varphi-d,$
where \begin{center}
  $\varphi=\alpha_{0,0}\rm{Id}$ and 
$d= \big(\alpha_{0,0}-\alpha_{1,1}\big)d_0+\sum\limits_{k\in\mathbb{Z}\setminus{ \{0\}}}\frac{\alpha_{0,k}}{k}d_k.$
\end{center}

Now we considering a quasi-derivation $f,$ such that $\alpha_{1,1}=\alpha_{0,j}=0$ for all $j\in\mathbb{Z}.$ 

\medskip

Taking the related linear map $f'$ as $$f'(L_i) = \sum\limits_{k\in\mathbb{Z}}\alpha'_{i,k}L_k+\beta'_iC, \quad f'(C) = \sum\limits_{k\in\mathbb{Z}}\alpha'_{k}L_k+\beta' C,$$   we have  
\begin{equation}\label{1a}
\begin{array}{cc}
    \sum\limits_{ k\in\mathbb{Z}}\Big(\alpha_{i,k}\left((k-j)L_{j+k} + \delta_{j+k,0}{(k^3-k)}C\right)+\alpha_{j,k}\left((i-k) L_{i+k} + \delta_{i+k,0}{(i^3-i)} C\right)\Big)\ =\ \\[2mm] 
    (i-j)\big(\sum\limits_{ k\in\mathbb{Z}}\alpha'_{i+j,k}L_k+\beta'_{i+j}C\big)+
    \delta_{i+j,0}{(i^3-i)}\big(\sum\limits_{ k\in\mathbb{Z}}\alpha'_{k}L_k+\beta'C\big).
    \end{array}
\end{equation}
By comparing the coefficients of the basis in \eqref{1a} we get
\begin{equation}\label{2a}
  (k-2j)\alpha_{i,k-j}+(2i-k)\alpha_{j,k-i}-(i-j)\alpha'_{i+j,k}-\delta_{i+j,0}{(i^3-i)}\alpha'_{k}\ =\ 0,
\end{equation}
\begin{equation}\label{3a}
  -{(j^3-j)} \alpha_{i,-j}+{(i^3-i)} \alpha_{j,-i}-(i-j)\beta'_{i+j}-\delta_{i+j,0}{(i^3-i)}\beta'\ =\ 0.
\end{equation}
Also, from the fact that $[L_i,C] = 0,$ we obtain
\begin{center}
    $0\ =\ [L_i,f(C)]\ =\ [L_i,\sum\limits_{k\in\mathbb{Z}}\alpha_{k}L_k]\ =\ \sum\limits_{k\in\mathbb{Z}}\alpha_{k}(i-k)L_{i+k},$
\end{center}
therefore $\alpha_k=0$ for all $k\in\mathbb{Z}.$

From \eqref{2a} we get $\alpha'_{i,i}=\alpha_{i,i}=0,$ $i\neq 0$ for the case $j\neq -i.$

In the following step, the value of the coefficients $\beta'$ and   $\beta'_i,$  $i\in\mathbb{Z}$
 is determined using the equation \eqref{3a}. 
 Letting $j=0,$ $i\neq0$ in \eqref{3a}, it gives  $\beta'_{i}=0,$  for $i\neq 0.$ Putting $i=1,$ $j=-1$ in  \eqref{3a}, we get $\beta'_0=0$ and putting $j=-i,$ 
we have  $\beta'=0.$
  Using the relation \eqref{7} 
in \eqref{3a}, we derive $j(i^2-j^2)\alpha_{i,-j}=0.$ Then it implies 
$\alpha_{i,j}=0$ for $j\neq -i .$
Putting $j=0$ in \eqref{2a}, we obtain $\alpha'_{i,k}=0$ for $i\neq 0,$ $k\neq - i.$ Letting $k=j-i,$ $j\neq -i$ in \eqref{2a}, we obtain $\alpha_{i,-i}=0$ for $i\in\mathbb{Z}$ therefore it implies   $\alpha'_{i,-i}=0$  for  $i\in\mathbb{Z}\setminus{0}.$

Next, we analyze the case  $j = -i.$  To do this, if we substitute $-i$ for $j$ in \eqref{2a}, the following relation arises:
\begin{equation}\label{31e}
 (k+2i) \alpha_{i,k+i}+(2i-k)\alpha_{-i,k-i}-2i\alpha'_{0,k}-{(i^3-i)} \alpha'_k\ =\ 0.
\end{equation} 
 We have 
\begin{equation}\label{32e}
2\alpha'_{0,k}+{(i^2-1)} \alpha'_k\ =\ 0,\quad i\neq0.
\end{equation}  
  Putting $i=1$ and $i\neq \pm 1$  in \eqref{31e}, we obtain 
$\alpha'_{0,k}=\alpha'_k=0$  for all $k\in\mathbb{Z}.$

Summarizing, our mappings $f$ and $f'$ have the following form
$$f(L_i)=\beta_iC, \quad  f(C)=\beta C,\quad  f'(L_i)=f'(C)=0.$$

Hence,  $f$ can be represented as a sum of mappings from 
Proposition \ref{exquasiV}.  
\end{proof}

The present theorem, Lemma \ref{glavlem}, and a well-known fact that each  biderivation of $\bold{Vir}$ is inner (i.e., skew-symmetric) give the following corollary.

\begin{corollary}
$\bold{Vir}$ does not admit nontrivial transposed $\delta$-Poisson structures.    
\end{corollary}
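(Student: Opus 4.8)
The plan is to split on the value of $\delta$, according to whether the ``in particular'' clause of Lemma~\ref{glavlem} applies. For every $\delta \neq 1$ I would show $\mathfrak{Der}_{\delta}(\bold{Vir}) \subseteq \langle {\rm Id}\rangle$ and then quote Lemma~\ref{glavlem}. Since $\mathfrak{Der}_{[\delta]}(\bold{Vir}) \subseteq {\rm Q}\mathfrak{Der}(\bold{Vir})$, every $\delta$-derivation is already among the maps described by the preceding theorem, so I would write an arbitrary $\delta$-derivation as $D = \sum_j \beta_j d_j + c\,{\rm Id} + g$ with $g \in {\rm Ann}^Q(\bold{Vir})$ and substitute this into $D[x,y] = \delta([Dx,y]+[x,Dy])$. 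Using that each $d_j$ is a derivation, that ${\rm Id}$ contributes the factor $2$, and that $g(\bold{Vir}) \subseteq \langle C\rangle$ lies in the centre (so $[gx,y]=[x,gy]=0$), the identity collapses --- after invoking perfectness $[\bold{Vir},\bold{Vir}]=\bold{Vir}$ to read off an operator identity --- to $(1-\delta)\sum_j \beta_j\,{\rm ad}(L_j) + (1-2\delta)c\,{\rm Id} + g = 0$ on $\bold{Vir}$. Evaluating on the $L_i$ and comparing coefficients, $(1-\delta)\neq 0$ forces all $\beta_j = 0$, which then forces $g=0$ (it must land in $\langle C\rangle$) and $(1-2\delta)c=0$, leaving $D = c\,{\rm Id} \in \langle{\rm Id}\rangle$. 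Hence Lemma~\ref{glavlem} excludes nontrivial transposed $\delta$-Poisson structures for all $\delta \neq 1$.

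The remaining case $\delta = 1$ is genuinely different, since here $\mathfrak{Der}_1(\bold{Vir}) = \mathfrak{Der}(\bold{Vir})$ consists of the inner derivations and is not contained in $\langle{\rm Id}\rangle$, so Lemma~\ref{glavlem} is powerless. Instead I would argue with the product directly: given a transposed $1$-Poisson structure, put $\varphi(x,y) = x\cdot y$, a symmetric bilinear map, and note that the defining identity with $\delta = 1$ reads $\varphi(x,[y,z]) = [\varphi(x,y),z] + [y,\varphi(x,z)]$, i.e. $\varphi$ is a derivation in each slot and hence a biderivation of $\bold{Vir}$. By the well-known fact that every biderivation of $\bold{Vir}$ is inner, $\varphi(x,y) = \lambda[x,y]$ for some $\lambda \in \mathbb{C}$; but this is skew-symmetric, while $\varphi$ is symmetric, so $\lambda[x,y] = \varphi(x,y) = \varphi(y,x) = -\lambda[x,y]$, forcing $\lambda = 0$ and $\varphi \equiv 0$. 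Thus the only transposed $1$-Poisson structure is trivial, and together with the previous paragraph this proves the corollary.

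I expect the main obstacle to be precisely the case $\delta = 1$: it is the single value at which the $\delta$-derivation bookkeeping of the first step fails to close, because the inner derivations survive and $\mathfrak{Der}_1(\bold{Vir})$ is not scalar. It is resolved not through Lemma~\ref{glavlem} but through the incompatibility between the symmetry of a commutative associative product and the skew-symmetry of the (inner) biderivations of $\bold{Vir}$. A secondary technical point, used throughout, is the systematic exploitation of perfectness $\bold{Vir} = [\bold{Vir},\bold{Vir}]$ and the centrality of $C$: these are what annihilate the component $g$ in the first step and what legitimize replacing the $\delta$-derivation identity by a clean identity of operators.
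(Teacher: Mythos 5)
Your proof is correct and follows essentially the same route as the paper: for $\delta\neq 1$ you reduce $\mathfrak{Der}_{\delta}(\bold{Vir})$ to $\langle{\rm Id}\rangle$ via the quasi-derivation decomposition and invoke Lemma~\ref{glavlem}, and for $\delta=1$ you use that every biderivation of $\bold{Vir}$ is inner, hence skew-symmetric — exactly the three ingredients the paper cites. Your write-up merely makes explicit the bookkeeping (the operator identity $(1-\delta)d+(1-2\delta)c\,{\rm Id}+g=0$ via perfectness) that the paper leaves to the reader.
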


\section{Quasi-derivations of Lie algebras ${\mathcal W}(a,b)$} \label{wab}

\begin{definition}[see \cite{kac, mat}]
The  algebra ${\mathcal W}(a,b)$ with $a,b\in\mathbb{C}$ is an algebra with a basis $\big\{L_i, I_i\big\}_{i\in\mathbb{Z}}$
 and the multiplication table is given in the following way 
$$[L_i,L_j]=(i-j)L_{i+j}, \quad [L_i,I_j]=-(j+bi+a)I_{i+j}.$$

\noindent
${\rm V}(a,b) = \big \langle I_i \big \rangle_{i \in \mathbb Z}$ is called the tensor density module of $\rm W.$

\end{definition}

 \begin{lemma}[see  \cite{gao}]\label{lemma}
 ${\mathcal {W}}(a,b) \simeq {\mathcal {W}}(a+k,b)$ for any $k\in\mathbb{Z}.$
  Up to isomorphism, the center of ${\mathcal {W}}(a,b)$ is
\begin{equation*}
    {\rm Ann}\big(\mathcal {W}(a,b)\big) = 
   \begin{cases} 
       \mathbb{C}I_0, & \text{if } (a,b) = (0,0) \\
      0, & \text{otherwise}
    \end{cases}
   \end{equation*}
\end{lemma}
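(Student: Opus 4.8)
The plan is to prove the two assertions separately, both by direct computation with the structure constants, using the brackets $[L_i,L_j]=(i-j)L_{i+j}$, $[L_i,I_j]=-(j+bi+a)I_{i+j}$ together with $[I_i,I_j]=0$ (the density module ${\rm V}(a,b)$ being an abelian ideal).

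For the isomorphism $\mathcal{W}(a,b)\simeq\mathcal{W}(a+k,b)$, I would write down an explicit linear bijection $\phi\colon\mathcal{W}(a,b)\to\mathcal{W}(a+k,b)$ that fixes the Witt part and shifts the module indices, namely $\phi(L_i)=L_i$ and $\phi(I_j)=I_{j-k}$. The only bracket sensitive to the parameters is $[L_i,I_j]$, and since $(j-k)+bi+(a+k)=j+bi+a$ one checks
\[
\phi\big([L_i,I_j]_{(a,b)}\big)=-(j+bi+a)\,I_{i+j-k}=[L_i,I_{j-k}]_{(a+k,b)}=\big[\phi(L_i),\phi(I_j)\big]_{(a+k,b)},
\]
while the $[L,L]$ and $[I,I]$ brackets are manifestly preserved. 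As $\phi$ permutes a basis, it is bijective, hence a Lie algebra isomorphism; letting $k$ run over $\mathbb{Z}$ gives all the claimed identifications.

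For the center I would first diagonalize $\mathrm{ad}_{L_0}$, which acts on the basis by $\mathrm{ad}_{L_0}(L_i)=-iL_i$ and $\mathrm{ad}_{L_0}(I_j)=-(j+a)I_j$. Writing a central element as $z=\sum_i\lambda_iL_i+\sum_j\mu_jI_j$, membership in $\ker\mathrm{ad}_{L_0}$ already forces $\lambda_i=0$ for $i\neq 0$ and $\mu_j=0$ unless $j+a=0$; in particular the $I$-part can only survive when $a\in\mathbb{Z}$, with surviving index $j=-a$. Imposing next $[z,L_m]=0$ for all $m$ yields $[z,L_m]=-\lambda_0 m\,L_m+\mu_{-a}\,bm\,I_{m-a}$, whence $\lambda_0=0$ and $\mu_{-a}b=0$; the relations $[z,I_m]=0$ are then automatic since the module is abelian.

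Therefore $z$ is central precisely when $b=0$ and $a\in\mathbb{Z}$, in which case $I_{-a}$ is indeed central (then $[L_i,I_{-a}]=-(-a+a)I_{i-a}=0$) and the center equals $\mathbb{C}I_{-a}$; in all other cases the center is $0$. Finally I would invoke the first part: when $b=0$ and $a\in\mathbb{Z}$, the isomorphism $\phi$ with $k=-a$ identifies $\mathcal{W}(a,0)$ with $\mathcal{W}(0,0)$ and carries $I_{-a}$ to $I_0$, so up to isomorphism the only parameter value with nonzero center is $(a,b)=(0,0)$, with center $\mathbb{C}I_0$. The computation is entirely routine; the only points needing care are the bookkeeping of which index survives in $\ker\mathrm{ad}_{L_0}$ and the reduction of the exceptional case $b=0,\ a\in\mathbb{Z}$ to $(0,0)$ through the first part.
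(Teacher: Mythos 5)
Your proof is correct. The paper itself offers no argument for this lemma --- it is simply quoted from the reference on the low-dimensional cohomology of ${\mathcal W}(a,b)$ --- so your direct verification is a self-contained substitute rather than an alternative to anything in the text. Both halves check out: the map $\phi(L_i)=L_i$, $\phi(I_j)=I_{j-k}$ preserves the only parameter-sensitive bracket because $(j-k)+bi+(a+k)=j+bi+a$, and the center computation is clean --- $\ker\mathrm{ad}_{L_0}$ cuts the candidate down to $\lambda_0 L_0+\mu_{-a}I_{-a}$ (the latter only when $a\in\mathbb{Z}$), and $[z,L_m]=-\lambda_0 m L_m+\mu_{-a}bm\,I_{m-a}$ forces $\lambda_0=0$ and $\mu_{-a}b=0$, so a nonzero center occurs exactly for $b=0$, $a\in\mathbb{Z}$, which the shift isomorphism normalizes to $(a,b)=(0,0)$ with center $\mathbb{C}I_0$. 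The one point worth stating explicitly is that when $b=0$ and $a\in\mathbb{Z}$ the element $I_{-a}$ really is central, i.e.\ $[L_i,I_{-a}]=-(-a+a)I_{i-a}=0$, which you do include; nothing is missing.
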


\begin{proposition} \label{lemma11} 
For the algebra  ${\mathcal W}(a,b)$ the following statements are true:
\begin{enumerate}
    \item[$(\mathfrak{a})$] The derivations of ${\mathcal W}(a,b)$ are determined by the following:
 \begin{equation*}
    \mathfrak{Der} \big(\mathcal {W}(a,b) \big) = 
    \begin{cases} 
        \operatorname{Inn}\big(\mathcal {W}(a,b)\big)\oplus \big\langle  D_1 \big \rangle \oplus\big\langle D_2^{0}\big \rangle\oplus\big\langle D_3\big \rangle &  (a,b) = (0,0), \\
         \operatorname{Inn}\big(\mathcal {W}(a,b)\big)\oplus\big\langle D_1\big \rangle\oplus\big\langle D_2^{1} \big \rangle&  (a,b) =(0,1),\\
        \operatorname{Inn}\big(\mathcal {W}(a,b)\big)\oplus\big\langle D_1\big \rangle\oplus\big\langle D_2^{2} \big \rangle&  (a,b) = (0,2),\\
         \operatorname{Inn}\big(\mathcal {W}(a,b)\big)\oplus\big\langle D_1 \big \rangle&  \text{otherwise},
    \end{cases}
   \end{equation*}   \noindent 
where the derivations $D_1,$ $D_2^{0},$ $D_2^{1},$ $D_2^{2},$ $D_3$ are defined as follows for all $i\in \mathbb{Z},$
\begin{longtable}{rcrrcl}
     $D_1(L_i)$&$=$&$0,$&   $D_1(I_i)$&$=$&$I_i;$\\
     $D_2^{k}(L_i)$&$=$&$i^{k+1}I_i,$&    $D_2^{k}(I_i)$&$=$&$0, \ k \in \big\{0,1,2\big\};$\\

     $D_3(L_i)$&$=$&$I_i,$&   $D_3(I_i)$&$=$&$0.$
\end{longtable}

\item[$(\mathfrak{b1})$] 
If $b\neq-1,$ then $  \mathfrak{Der}_{\frac{1}{2}} \big({\mathcal W}(a,b)\big) = \big\langle {\rm Id} \big\rangle.$

\item[$(\mathfrak{b2})$] 
$  \mathfrak{Der}_{\frac{1}{2}} \big({\mathcal W}(a,-1) \big) = \big\langle \varphi_j, \psi_j \big\rangle_{j \in \mathbb Z},$
where 
\begin{center}
    $\varphi_j(L_i)=  L_{i+j}, \ 
\varphi_j(I_i)= I_{i+j}; \quad  
\psi_j(L_i)= I_{i+j}, \
\psi_j(I_i)= 0.$
\end{center}

\item[$(\mathfrak{c})$] 
The following linear mappings   given by   
\begin{longtable}{|rclrcllll|}
\hline
${\mathfrak F}_j(L_i) $&$=$&$  i L_{i+j},$ & ${\mathfrak F}_j(I_i)$&$ = $& \multicolumn{4}{l|}{$ (a+i) I_{i+j}$ }\\
\hline
 
$g_j(L_i)$&$=$&$i I_{i+j},$  & $g_j(I_i)$&$=$&$0$ &&&\\
\hline
$h(L_i)$&$=$&$I_{i},$  & $h(I_i)$&$=$&$0$ &&&\\ 
\hline
${\mathfrak p}_{0,j}(L_i)$ & $=$ & $\frac{i(i-1)(a+2+j)}{2(a+i+j)}I_{i+j}$, & ${\mathfrak p}_{0,j}(I_i)$ & $=$ & $0$ & \text{for}&  $a\notin \mathbb{Z},$  &$b=0$\\
\hline
${\mathfrak p}_{1,j}(L_i)$ & $=$ & $\frac{i(i-1)}{2}I_{i+j}$, & ${\mathfrak p}_{1,j}(I_i)$ & $=$ & $0$ & \text{for}& & $b=1$\\
\hline
${\mathfrak p}_{2,j}(L_i)$ & $=$ & $\frac{i(i-1)(a+1+i+j)}{2(a+3+j)}I_{i+j}$, & ${\mathfrak p}_{2,j}(I_i)$ & $=$ & $0$ & \text{for}&  $(a,j)\neq (0,-3) $ & $b=2$\\
\hline

${\mathfrak E}(L_i) $&$=$&$  \frac{i(i-1)(i-2)}{6} I_{i-3},$ & ${\mathfrak E}(I_i)$&$ = $&$ 0$ & \text{for}& $a=0$ & $b=2$\\
\hline

\multicolumn{9}{l}{}\\
\multicolumn{9}{l}{$\mbox{are quasi-derivations of ${\mathcal W}(a,b)$ with related mappings}$}\\ 

\hline
${\mathfrak F}'_j(L_i) $&$=$&$  (i+j) L_{i+j},$& ${\mathfrak F}'_j(I_i) $&$=$&\multicolumn{4}{l|}{$  (a+i+j) I_{i+j}$ }\\
\hline 
$g'_j(L_i)$&$=$&$(a+i+j) I_{i+j},$&$g'_j(I_i)$&$=$&$0$ &&&\\
\hline
$h'(L_i)$&$=$&$(1-b)I_{i},$ &  $h'(I_i)$&$=$&$0$ &&&\\ \hline
${\mathfrak p}'_{0,j}(L_i)$ & $=$ & $\frac{(i-1)(a+2+j)}{2}I_{i+j}$, & ${\mathfrak p}'_{0,j}(I_i)$ & $=$ & $0$ & \text{for}& $a\notin \mathbb{Z}$ & $b=0$ \\ \hline
${\mathfrak p}'_{1,j}(L_i)$ & $=$ & $\frac{(i-1)(a+i+j)}{2}I_{i+j}$, & ${\mathfrak p}'_{1,j}(I_i)$ & $=$ & $0$ & \text{for}& & $b=1$\\
\hline
${\mathfrak p}'_{2,j}(L_i)$ & $=$ & $\frac{(i-1)(a+i+j)(a+1+i+j)}{2(a+3+j)}I_{i+j}$, & ${\mathfrak p}'_{2,j}(I_i)$ & $=$ & $0$ & \text{for}& $(a,j)\neq (0,-3)$ & $b=2$ \\
\hline
${\mathfrak E}'(L_i) $&$=$&$  \frac{(i-1)(i-2)(i-3)}{6} I_{i-3},$ & ${\mathfrak E}'(I_i)$&$ = $&$ 0$ & \text{for}& $a=0$ & $b=2$ \\
\hline


\end{longtable}
\noindent 


 \item[$({\mathfrak d})$] 
    $ {\rm Ann}^Q  \big( {\mathcal W}(a,b) \big)=\big\{ g \in {\rm End}({\mathcal W}(a,b)) \ |\  g({\mathcal W}(a,b)) \subseteq  {\rm Ann}  \big( {\mathcal W}(a,b) \big) \big\} \subseteq {\rm Q}\mathfrak{Der} \big( {\mathcal W}(a,b) \big).$      

\end{enumerate}

\end{proposition}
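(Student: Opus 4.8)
The plan is to exploit the $\mathbb{Z}$-grading $\deg L_i = \deg I_i = i$ of $\mathcal{W}(a,b)$: each of the defining identities is homogeneous, so a candidate map can be decomposed into homogeneous components and every component analysed by the shift it induces. Writing an arbitrary linear map on generators as
$$f(L_i) = \sum_{k} a_{i,k} L_k + \sum_{k} b_{i,k} I_k, \qquad f(I_i) = \sum_{k} c_{i,k} L_k + \sum_{k} d_{i,k} I_k,$$
and substituting into the three bracket types $[L_i,L_j]$, $[L_i,I_j]$ and $[I_i,I_j]=0$ turns each part into a system of scalar recurrences in the four blocks $(a_{i,k},b_{i,k},c_{i,k},d_{i,k})$ that I solve shift by shift. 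Throughout I use that $V(a,b)=\langle I_i\rangle$ is an abelian ideal and that, modulo $V$, the algebra is the Witt algebra $\mathrm{W}$, which lets me import Proposition~\ref{exquasiW} and Theorem~\ref{qWitt}.

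For part $(\mathfrak{a})$, feeding a derivation $D$ into $0=D[I_i,I_j]=[DI_i,I_j]+[I_i,DI_j]$ first forces the $L$-block $c_{i,k}$ to vanish, so $D$ preserves $V$. Reading $D[L_i,L_j]=\dots$ modulo $V$ then shows the induced map on $\mathcal{W}(a,b)/V\cong\mathrm{W}$ is a derivation of $\mathrm{W}$, hence inner by Proposition~\ref{exquasiW}$(\mathfrak{a})$; subtracting the matching inner derivation I may assume the $L\to L$ block is zero. Two decoupled problems remain. The $I\to I$ block is then forced to be a $\mathrm{W}$-module endomorphism of $V(a,b)$, and the resulting recurrence identifies all its coefficients, so it must be a single scalar, giving $D_1$. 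The $L\to I$ block is a $1$-cocycle of $\mathrm{W}$ with values in $V(a,b)$ modulo the coboundaries coming from $\mathrm{ad}(I_m)$; its dimension is governed by $H^1(\mathrm{W},V(a,b))$, which is trivial for generic $(a,b)$ but enlarges exactly at $(a,b)\in\{(0,0),(0,1),(0,2)\}$, producing the extra cocycle representatives $D_2^0,D_3$ (for $(0,0)$) and $D_2^1,D_2^2$ (for $(0,1),(0,2)$). This cocycle computation is the analytic heart of $(\mathfrak{a})$.

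Part $(\mathfrak{b})$ runs identically for the identity $\phi[x,y]=\frac12([\phi x,y]+[x,\phi y])$. Proposition~\ref{exquasiW}$(\mathfrak{b})$ pins the $L\to L$ block to the span of the shifts $L_i\mapsto L_{i+j}$, and the cross blocks are controlled by the weight $-(j+bi+a)$ in $[L_i,I_j]$. Matching coefficients on $[L_i,I_k]$ for a degree-$j$ shift reduces to the single scalar condition $j(1+b)=0$: when $b=-1$ this holds for every $j$, so the shift survives on the $I$-block as well (giving $\varphi_j$) and a new $L\to I$ shift $\psi_j$ appears, which is part $(\mathfrak{b}2)$; when $b\neq-1$ only $j=0$ survives and the whole map collapses to $\langle\mathrm{Id}\rangle$, which is $(\mathfrak{b}1)$. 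The structural reason is that for $b=-1$ the weight becomes $(i-j-a)$, mirroring the antisymmetric Witt coefficient $(i-j)$.

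Parts $(\mathfrak{c})$ and $(\mathfrak{d})$ are verifications rather than classifications. For $(\mathfrak{c})$ I substitute each listed map $f$ together with the related map $f''$ supplied by the second table into $[f(x),y]+[x,f(y)]=f''[x,y]$ and check the resulting scalar identity on the three bracket types; the density factors $a+i$, $a+i+j$ and the rational coefficients in $\mathfrak{p}_{b,j}$ and $\mathfrak{E}$ are arranged precisely so that the $L\to I$ contributions cancel, so this is routine bookkeeping. For $(\mathfrak{d})$, if $g(\mathcal{W}(a,b))\subseteq\mathrm{Ann}(\mathcal{W}(a,b))$ then $[g(x),y]=[x,g(y)]=0$ for all $x,y$, whence $[g(x),y]+[x,g(y)]=0=f''[x,y]$ with $f''=0$, so $g$ is a quasi-derivation; by Lemma~\ref{lemma} this is non-trivial only when $(a,b)=(0,0)$. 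I expect the main obstacle to be the bookkeeping in $(\mathfrak{a})$ and $(\mathfrak{b}2)$: pinning down exactly the three exceptional pairs where the cocycle space grows, tracking which recurrence solutions have finite support, and separating the genuinely new quasi-derivations of $(\mathfrak{c})$ from sums of a derivation and a $\frac12$-derivation.
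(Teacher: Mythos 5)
Your proposal is correct in substance, but it is a genuinely more self-contained route than the paper's, whose entire proof of this proposition consists of citing \cite{gao} for part $(\mathfrak{a})$, \cite{FKL} for part $(\mathfrak{b})$, and declaring $(\mathfrak{c})$ and $(\mathfrak{d})$ to be direct computations. What you reconstruct for $(\mathfrak{a})$ --- pass to the quotient by the abelian ideal ${\rm V}(a,b)=\langle I_i\rangle$ to land in the Witt algebra, remove the $L\to L$ block by an inner derivation, identify the $I\to I$ block as a module endomorphism (hence a scalar, giving $D_1$), and recognize the $L\to I$ block as a class in $H^1\big({\rm W},{\rm V}(a,b)\big)$, nonvanishing exactly at $(a,b)\in\{(0,0),(0,1),(0,2)\}$ --- is precisely the computation performed in the cited reference, so your approach buys a proof readable without leaving the paper, at the cost of actually having to carry out the $H^1$ computation that you only assert (it is the genuine content of $(\mathfrak{a})$, and you correctly flag it as such). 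Your scalar condition $j(1+b)=0$ in part $(\mathfrak{b})$ is likewise the correct mechanism behind the $b=-1$ dichotomy of \cite{FKL}, and your treatments of $(\mathfrak{c})$ and $(\mathfrak{d})$ coincide with the paper's ``direct computations''; in particular your one-line argument for $(\mathfrak{d})$ (take $f''=0$) is exactly right.

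Two repairable imprecisions. First, the identity $0=[DI_i,I_j]+[I_i,DI_j]$ alone does \emph{not} force the $I\to L$ block $c_{i,k}$ to vanish for every $(a,b)$: for $b=1$ it only yields $(m+a)\big(c_{j,m-i}-c_{i,m-j}\big)=0$, which leaves the shift-type solutions $c_{i,k}=\gamma_{k-i}$ alive. You must also use the $L$-component of $D[L_i,I_j]=[DL_i,I_j]+[L_i,DI_j]$, namely $-(j+bi+a)\,c_{i+j,m}=(2i-m)\,c_{j,m-i}$, which then kills $\gamma$. Second, in $(\mathfrak{b}1)$ the cross block $I_i\mapsto \nu L_{i+j}$ also needs to be excluded explicitly: for $b=1$ the $[I,I]$ relation does not do it, but comparing the $L$-components of $\tfrac12\big([\phi L_i,I_k]+[L_i,\phi I_k]\big)$ with $\phi[L_i,I_k]$ forces $-\nu=\nu/2$, hence $\nu=0$. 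Neither point changes the conclusion, but as literally written those two steps would fail at $b=1$.
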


\begin{proof}
    ($\mathfrak{a}$) follows from \cite{gao}.
    ($\mathfrak{b}$) follows from \cite{FKL}.
    ($\mathfrak{c})$ and $(\mathfrak{d}$) follow  from direct computations.
\end{proof}

\begin{definition}
    Let ${\mathcal V}$ be a bimodule over an algebra $({\mathfrak L}, [\cdot,\cdot])$ with an action $\circ : {\mathfrak L} \times {\mathcal V},{\mathcal V} \times {\mathfrak L} \rightarrow {\mathcal V},$
    then     
   a linear map $f: {\mathfrak L} \rightarrow \mathcal{V}$ is called a quasi-derivation if there exists  a linear map $f'  : {\mathfrak L} \rightarrow \mathcal{V}$ such that 
\begin{eqnarray}\label{qderm}
    f(x) \circ y +x \circ f(y)\ =\ f'[x,y].
\end{eqnarray}
\end{definition}

 \begin{lemma}\label{lemmabeta}
     
Let $f \in {\rm Q}\mathfrak{Der} \big({\rm   W}, {\rm V}(a,b)\big),$ then $f$ is a sum of
  mappings from  Proposition \ref{lemma11}.   
In particular, 
  $ {\rm Q}\mathfrak{Der} \big({\rm   W}, {\rm V}(a,-1)\big) =
  \mathfrak{Der} \big({\rm   W}, {\rm V}(a,-1)\big) \oplus 
  \mathfrak{Der}_{\frac 12} \big({\rm   W}, {\rm V}(a,-1)\big).$

 \end{lemma}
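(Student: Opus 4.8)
The plan is to reduce the defining identity \eqref{qderm} to a family of scalar functional equations indexed by a shift and then to solve each of them. Write $f(L_i)=\sum_{k\in\mathbb Z}\gamma_{i,k}I_k$ and $f'(L_i)=\sum_{k\in\mathbb Z}\gamma'_{i,k}I_k$ for the related map. Taking $x=L_i,$ $y=L_j$ in \eqref{qderm}, using $I_k\circ L_j=[I_k,L_j]=(k+bj+a)I_{k+j}$ and $L_i\circ I_k=[L_i,I_k]=-(k+bi+a)I_{i+k},$ and comparing the coefficients of $I_m,$ I obtain
\[
(m+(b-1)j+a)\,\gamma_{i,m-j}-(m+(b-1)i+a)\,\gamma_{j,m-i}=(i-j)\,\gamma'_{i+j,m}.
\]
The key observation is that the three coefficients $\gamma_{i,m-j},$ $\gamma_{j,m-i},$ $\gamma'_{i+j,m}$ all have the same difference $s:=m-i-j$ between target and source index, so the system splits over $s.$ Setting $m=i+j+s$ and writing $p_s(i):=\gamma_{i,i+s}$ and $p'_s(i):=\gamma'_{i,i+s},$ the whole identity collapses, for each fixed $s\in\mathbb Z,$ to
\[
(i+bj+s+a)\,p_s(i)-(bi+j+s+a)\,p_s(j)=(i-j)\,p'_s(i+j).
\]

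The heart of the proof is to classify the solutions $(p_s,p'_s)$ of this single equation. I would first record the two solutions present for every $b$: the constant profile $p_s\equiv 1$ (with $p'_s\equiv 1-b$) and the linear profile $p_s(i)=i$ (with $p'_s(i)=i+s+a$); together with the inner-derivation profile $p_s(i)=bi+s+a,$ these are the $L\to I$ parts of the maps $g_s,$ $h,$ the inner derivations, and, when $b=-1,$ $\psi_s,$ from Proposition \ref{lemma11}. To show that these essentially exhaust the solutions I would specialise to $j=1,$ which turns the equation into a first-order recurrence expressing $p_s(i+1)$ through $p_s(i),$ $p_s(1)$ and $p_s(0),$ valid whenever $(i-1)(i+1+s+a)\neq 0.$ Hence the solution space is at most two-dimensional, i.e.\ affine in $i,$ except possibly at the indices $i=1$ and (when $a\in\mathbb Z$) $i=-1-s-a,$ where the recurrence degenerates. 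A direct analysis of these degenerate indices is what produces the additional, non-affine profiles; it shows they occur precisely for $b\in\{0,1,2\}$ (with the integrality restriction on $a$ in the case $b=0$), and reconstructing their closed forms yields exactly the maps ${\mathfrak p}_{0,s},$ ${\mathfrak p}_{1,s},$ ${\mathfrak p}_{2,s}$ and ${\mathfrak E}.$ Finally, since any nonzero profile is a nonzero solution of the recurrence and therefore nonvanishing for infinitely many $i,$ only finitely many shifts $s$ can contribute to a well-defined $f,$ so $f$ is a finite sum of the maps listed in Proposition \ref{lemma11}.

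For the final assertion I specialise to $b=-1.$ Since $-1\notin\{0,1,2\},$ the degenerate-index analysis produces no extra solutions, so every profile $p_s$ is affine in $i.$ For $b=-1$ the affine profiles are spanned by the constant profile $p_s\equiv 1,$ which is the $L\to I$ part of the $\frac12$-derivation $\psi_s,$ and by the profile $p_s(i)=s-i+a$ of the inner derivation attached to $I_s$ (recall that $D_1$ contributes nothing to the $L\to I$ block). Thus $f$ lies in $\mathfrak{Der}\big({\rm W},{\rm V}(a,-1)\big)+\mathfrak{Der}_{\frac12}\big({\rm W},{\rm V}(a,-1)\big),$ and the sum is direct: for each $s$ the derivation profile $s-i+a$ has slope $-1$ whereas the half-derivation profile is constant, so the two are linearly independent as functions of $i.$ The profiles $p_s(i)=i$ of $g_s$ and the map $h$ are then redundant, being combinations of the two above, which is exactly why no genuinely new quasi-derivations survive at $b=-1.$

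I expect the main obstacle to be the degenerate-index bookkeeping of the second paragraph: at the finitely many indices where the $j=1$ recurrence breaks, one must check by hand, for each value of $b,$ whether an additional solution dimension actually appears and, if so, recover its closed form while keeping track of the integrality of $a.$ Verifying cleanly that $b=-1$ admits no such extra dimension — so that the description collapses to affine profiles and hence to a direct sum of derivations and $\frac12$-derivations — is the delicate point underlying the ``in particular'' statement.
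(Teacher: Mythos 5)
Your reduction to the shift--decoupled equation $(i+bj+s+a)\,p_s(i)-(bi+j+s+a)\,p_s(j)=(i-j)\,p'_s(i+j)$ is correct and is in substance the same device the paper uses: its identity \eqref{1.2} only couples $\beta_{j,k}$ to $\beta_{j-1,k-1}$, i.e.\ it already propagates along the diagonals $k-j=s$, and the paper's substitutions $i=0$ and $i=1$ produce exactly your first-order recurrence, in the form $(j-2)(k+a)\beta_{j,k}=j(k+a+b-1)\beta_{j-1,k-1}$. So the skeleton of your argument coincides with the paper's. The genuine gap is that the entire content of the lemma lives in the step you defer: at a degenerate index the recurrence leaves a value free, and deciding whether that freedom is compatible with the full two-parameter family of equations (not just $j\in\{0,1\}$) is precisely the long case analysis the paper carries out, with separate substitutions such as $i=2,j=3$; $i=2,j=-1$; $i=3,j=-1$; $(i,j,k)=(1-j,j,k)$, done separately for $b=-1$, $b\in\{0,1,2\}$, $b\notin\{-1,0,1,2\}$, and for $a\notin\mathbb Z$ versus $a\in\mathbb Z$. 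Asserting that extra, non-affine profiles ``occur precisely for $b\in\{0,1,2\}$'' is asserting the theorem: nothing in your sketch rules out an extra solution at $b=-1$ with $a\in\mathbb Z$, which is exactly the case the ``in particular'' clause depends on. (In the paper the exceptional set $\{0,1,2\}$ emerges from a factor $b(b-1)(b-2)$ in the consistency check at the degenerate index; you would need to exhibit that computation.) Your list of degenerate indices is also incomplete: eliminating $p'_s(i+1)$ through the $j=0$ relation breaks at $i=-1$ as well, and the backward recurrence degenerates where $i+b+s+a=0$.

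There is also one step that is simply false as stated: the closing finiteness argument (``any nonzero profile is nonvanishing for infinitely many $i$, hence only finitely many shifts contribute''). For $(a,b)=(0,0)$ the element $I_0$ is annihilated by the action, so for every shift $s$ the profile $p_s(i)=\delta_{i,-s}$ — that is, $L_{-s}\mapsto I_0$ with $f'=0$ — is a quasi-derivation supported at a single index. Infinitely many shifts can and do contribute simultaneously while $f$ remains well defined; these maps constitute the summand ${\rm Ann}^Q$ in Proposition \ref{lemma11}$(\mathfrak d)$ (and the map ${\mathfrak f}_3$ in the paper's case $b=0,\ a=0$), which your argument would wrongly exclude. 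This does not affect the $b=-1$ assertion, but it does affect the general statement of the lemma, so the finiteness claim must be restricted to the non-degenerate situation and the $(0,0)$ case handled separately.
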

\begin{proof}
Let $f \in  {\rm Q}\mathfrak{Der}\big({\rm W},   {\rm V}(a,b)\big),$ then 
\begin{longtable}{rclrcl}
$f(L_i) $&$ = $&$ 
\sum\limits_{k\in\mathbb{Z}}\beta_{i,k}I_k, $& 
$f'(L_i) $&$ = $&$  \sum\limits_{k\in\mathbb{Z}} \beta'_{i,k}I_k.$
\end{longtable}
By \eqref{qderm}, we have
\begin{longtable}{rcl}
$\sum\limits_{ k\in\mathbb{Z}}\left((k+a+bj)\beta_{i,k}I_{j+k} -(k+a+bi)\beta_{j,k} I_{i+k}\right)$&$=$&$(i-j)\sum\limits_{ k\in\mathbb{Z}}\beta'_{i+j,k}I_k$
\end{longtable}
or, equivalently
\begin{eqnarray}
\label{1.2} (k-j+a+bj)\beta_{i,k-j} -(k-i+a+bi)\beta_{j,k-i}\ =&(i-j)\beta'_{i+j,k}.
\end{eqnarray}

\begin{enumerate}
\item Let $b=-1.$  
Taking  $\xi_k$ and $\eta_k$ as solutions of the following system of equations $$\xi_k+(k+a)\eta_k=\beta_{0,k}, \quad \xi_{k-1}+(k+a-2)\eta_{k-1}=\beta_{1,k},$$ for the $\frac 1 2$-derivation  $\psi(L_i)=\sum\limits_{k\in\mathbb{Z}}\xi_kI_{i+k}$ and for the 
element $x=\sum\limits_{k\in\mathbb{Z}}\eta_kI_k,$ we get that
\begin{center}
    $\psi (L_0)+{\rm ad}_x(L_0)=f(L_0)$ and 
$\psi (L_1)+{\rm ad}_x(L_1)=f(L_1).$
\end{center}

Thus, replacing $f$ by $f-\psi-{\rm ad}_x,$ we may suppose that $\beta_{0,k}=\beta_{1,k}=0$ for all $k\in\mathbb{Z}.$

Putting $i=0$ and  $i=1$ in \eqref{1.2}, we get 
\begin{center}
    $(k+a)\beta_{j,k}= j \beta'_{j,k}$ and   
$(k+a-2)\beta_{j,k-1}= (j-1) \beta'_{j+1,k},$
\end{center} which implies  
\begin{equation}\label{1.303}(j-2)(k+a)\beta_{j,k} = j(k+a-2)\beta_{j-1,k-1}.\end{equation}

\begin{enumerate}
\item Let $a\notin \mathbb{Z},$ then, 
    by induction we get 
    \begin{longtable}{rcll}
       $\beta_{j,k} $&$=$&$ \frac{j(j-1)(k+a+1-j)(k+a+2-j)}{2(k+a)(k+a-1)} \beta_{2,k-j+2}, $&$ j\geq 2;$\\ 
       $\beta_{j,k} $&$=$&$ \frac{j(j-1)(k+a-1-j)(k+a-2-j)}{2(k+a)(k+a-1)} \beta_{-1,k-j-1}, $&$ j\leq -1.$ 
    \end{longtable}
    

Now, letting $i=2,$ $j=3$ and $i=-1,$ $j=-2$ in \eqref{1.2}, respectively, we get
\begin{longtable}{lclcl}
$0$&$=$&$(k+a-6)\beta_{2,k-3}-(k+a-4) \beta_{3,k-2}+\beta'_{5,k}$&$=$&$-\frac{12}{(k+a-2) (k+a-1)}\beta_{2,k-3};$\\
$0$&$=$&$(k+a+4)\beta_{-1,k+2}-(k+a+2) \beta_{-2,k+1}-\beta'_{-3,k}$&$=$&$\frac{12}{(k+a-1) (k+a)}\beta_{-1,k+2}.$
\end{longtable}

Due to arbitrary $k$, we have that $\beta_{2,k}=0$ and $\beta_{-1,k}=0$ for all $k \in \mathbb{Z}.$ Thus, we obtain 
$\beta_{j,k}=0,$ for all $j, k \in \mathbb{Z}.$

\item Let $a\in \mathbb{Z}.$ Without loss of generality, we may assume $a=0.$ 
By taking $k=0$ and $k=2$ in equation  \eqref{1.303}, we obtain, respectively,
$\beta_{j,-1} =0$ for $j \neq -1$ and  $\beta_{j,2} =0$ for $j \neq 2.$
Then, similarly to the previous case, we obtain inductively that
\begin{longtable}{lcll}
$\beta_{j,k}$&$ =$&$ \frac{j(j-1)(k+1-j)(k+2-j)}{2k(k-1)} \beta_{2,k-j+2},$&$ \ 2 \leq j \leq  k;$\\ 
$\beta_{j,k}$&$ =$&$ \frac{j(j-1)(k-1-j)(k-2-j)}{2k(k-1)} \beta_{-1,k-j-1},$&$\  k \leq j\leq -1;$\\ 
$\beta_{j,k} $&$=$&$0,$&$ \ k \geq 2 \  \text{and} \ j< 2 \ \text{or} \ j > k;$\\
$\beta_{j,k} $&$=$&$0,$&$ \ k \leq -1 \  \text{and} \ j> -1 \ \text{or} \ j < k.$
\end{longtable}
Now, by setting $i=2,$ $j=3$ and $i=-1,$ $j=-2$ in \eqref{1.2}, respectively, we obtain
$\beta_{2,k}=0$ for $k \geq 2,$ and $\beta_{-1,k}=0$ for $k \leq -1.$


Letting $k=1$ in  \eqref{1.303}, we obtain $(j-2)\beta_{j,1} = -j\beta_{j,0}.$
Finally, taking $k=i+1$ in equation  \eqref{1.2}, we derive
\begin{center}
$(i+1-2j)\beta_{i,i+1-j} +(i-1)\beta_{j,1}\ =(i-j)\beta'_{i+j,i+1}.
$\end{center} 
Since for the sufficiently large $i,$ we have $\beta_{i,i+1-j}=\beta'_{i+j,i+1}=0,$ it follows that $\beta_{j,1}=0$ for any $j.$  
Therefore, we obtain that $\beta_{j,k}=0$ for all  $j, k \in \mathbb{Z}.$

\end{enumerate}

\item Let $b\in \big\{0;1;2\big\}$ and $a \notin \mathbb{Z}.$ We replace $f$ by 
$f-{\rm ad}_x-\sum\limits_{k\in\mathbb{Z}}\eta_k g_k -\sum\limits_{k\in\mathbb{Z}}\theta_k {\mathfrak p}_{j,k}-  \beta_{0,0} h,$ where 
\begin{center}
$x=\sum\limits_{k\in\mathbb{Z}\setminus \{0\}} \frac{\beta_{0,k}}{k+a} I_k;$   \quad 
$\eta_0 = \beta_{1,1} - \beta_{0,0}, $\quad 
$\eta_k = \beta_{1,k+1} - \frac{k+a+b}{k+a} \beta_{0,k}, \ k \in \mathbb{Z}\setminus \{0\};$\quad
$\theta_k=\beta_{2,k+2}-2\beta_{1,k+1}+\beta_{0,k}, \ k \in \mathbb{Z}.$

\end{center}

Thus, we may assume $\beta_{0,k}=\beta_{1,k}=\beta_{2,k}=0$ for  $k\in\mathbb{Z}.$ 

Putting $i=0,$  and $i=1$ in \eqref{1.2}, we get 
\begin{center}
$\begin{array}{rcl}(k+a)\beta_{j,k} &= &j \beta'_{j,k},\\[1mm] (k+a+b-1)\beta_{j,k-1}&= &(j-1) \beta'_{j+1,k}.
\end{array}$ 
\end{center}

Thus, we obtain 
\begin{center}
$\begin{array}{rcl}(k+a)(j-2)\beta_{j,k} &= & (k+a+b-1) j \beta_{j-1,k-1}.
\end{array}$ 
\end{center}
Since $\beta_{2,k}=0$ and $a \notin \mathbb{Z},$ then recurrently we get $\beta_{j,k}=0$ for any $j\geq 3.$ Consequently, $\beta'_{j,k}=0$ for any $j\geq 3.$

Now, considering \eqref{1.2} for $j \leq -1, $ and $i$ such that $i+j \geq 1,$ we derive \begin{center}$(a+bi+k)\beta_{j,k}=0.$\end{center} Since, $a \notin \mathbb{Z},$ then  $a+bi+k\neq 0$  and we wet $\beta_{j,k}=0$ for any $j\leq -1.$  

Therefore, we obtain that $\beta_{j,k}=0$ for all  $j, k \in \mathbb{Z}.$

\item Let $b\notin \big\{ -1;0;1;2 \big\}$ and $a\notin \mathbb{Z}.$ We replace $f$ by 
$f-{\rm ad}_x-\sum\limits_{k\in\mathbb{Z}}\eta_k g_k - \beta_{0,0} h,$ where \begin{center}
$x=\sum\limits_{k\in\mathbb{Z}\setminus \{0\}}\frac{\beta_{0,k}}{k+a} I_k;$ \quad  
$\eta_0 = \beta_{1,1} - \beta_{0,0},$ \quad $\eta_k = \beta_{1,k+1} - \frac{k+a+b}{k+a} \beta_{0,k}, \ k \in \mathbb{Z}\setminus \{0\}.$
\end{center}

Thus, we may assume $\beta_{0,k}=\beta_{1,k}=0$ for  $k\in\mathbb{Z}.$

Putting $i=0$ and  $i=1$ in \eqref{1.2}, we get 
\begin{center}
$(k+a)\beta_{j,k}\ =\  j \beta'_{j,k}$ \ and \  
$(k+a+b-1)\beta_{j,k-1}\ =\  (j-1) \beta'_{j+1,k}.$ 
\end{center}

Since $a\notin\mathbb{Z},$ then we have
    $\beta_{j,k} \ = \ \frac{j(k+a+b-1)}{(j-2)(k+a)} \beta_{j-1,k-1}.$ Thus, by induction we get $$\beta_{j,k}\ = \  \frac{j(j-1)\prod\limits_{t=1}^{j-2}(k+a+b-t)}{2\prod\limits_{t=0}^{j-3}(k+a-t)} \beta_{2,k-j+2}, \quad j>2.$$ 

Putting $i=2$ and $j=-1$ in \eqref{1.2}, we obtain  
$(k+a+2b)\beta_{-1,k} \ =\ (k+a-b+3)\beta_{2,k+3},$ which implies $\beta_{2,k+3}=0$ for $k = -a-2b.$

Then putting $i=3$ and $j=-1$ in \eqref{1.2} for $k \neq -a-2b,$ we obtain
\begin{longtable}{lclcl}
$0$&$=$&$(k+a+3b) \beta_{-1,k}-(k+a-b+4)\beta_{3,k+4}+4\beta'_{2,k+3}$&$=$&$\frac{6b(b-1)(b-2)}{(k+a+4) (k+a+2b)}\beta_{2,k+3}.$
\end{longtable}
 
 Thus, we get $\beta_{2,k}=0$ for any $k \in \mathbb{Z},$
 i.e., $\beta_{j,k}=0$ for any $j \leq 2$ and $k \in \mathbb{Z}.$

 Now, considering \eqref{1.2} for $j \leq -1, $ and $i$ such that $i+j \geq 1,$ we derive \begin{center}
     $(a+bi+k)\beta_{j,k}=0.$
 \end{center} Since, $a \notin \mathbb{Z},$ then there exist $i$ such that $a+bi+k\neq 0.$ Hence, $\beta_{j,k}=0$ for any $j\leq -1.$  

Therefore, we obtain that $\beta_{j,k}=0$ for all  $j, k \in \mathbb{Z}.$

 \item  Let $b\in \big\{0;1;2\big\},$ and $a = 0.$ 
We replace $f$ by 
$f-{\rm ad}_x-\sum\limits_{k\in\mathbb{Z}}\eta_k g_k - \beta_{0,0} h,$ where \begin{center}
$x=\sum\limits_{k\in\mathbb{Z}\setminus \{0\}} \frac{\beta_{0,k}}{k} I_k;$ \quad  
$\eta_0 = \beta_{1,1} - \beta_{0,0},$ \quad 
$\eta_k = \beta_{1,k+1} - \frac{k+b}{k} \beta_{0,k}, \ k \in \mathbb{Z}\setminus \{0\}.$
\end{center}

Thus, we may assume $\beta_{0,k}=\beta_{1,k}=0$ for  $k\in\mathbb{Z}.$ 


Putting $i=0,$  and $i=1$ in \eqref{1.2}, we get 
\begin{center}
$\begin{array}{rcl}k \beta_{j,k} &= &j \beta'_{j,k},\\[1mm] (k+b-1)\beta_{j,k-1}&= &(j-1) \beta'_{j+1,k}.
\end{array}$ 
\end{center}

Thus, we obtain 
\begin{equation*}\begin{array}{rcl}k(j-2)\beta_{j,k} &= & (k+b-1) j \beta_{j-1,k-1}.
\end{array}  
\end{equation*}
\begin{enumerate} 
\item Let $b=2.$ Then we have
\begin{equation*}
\begin{array}{rcl} k \beta_{j,k} &= &j \beta'_{j,k},\\[1mm] (k+2)\beta_{j,k}&= &(j-1) \beta'_{j+1,k+1},
\end{array}\end{equation*}
and
\begin{equation}\label{2.b=2}\begin{array}{rcl}k(j-2)\beta_{j,k} &= & (k+1) j \beta_{j-1,k-1}.
\end{array} 
\end{equation}

Then again replacing $f$ by $f-\sum\limits_{k\in\mathbb{Z}\setminus \{-3\} }\beta_{2,k+2} {\mathfrak p}_{2,k},$  we may assume $\beta_{2,k}=0$ for  $k \neq -1.$
Moreover, taking $k=0$ and $j=3$, in \eqref{2.b=2}, we get $\beta_{2,-1} =0.$ 
Thus, we have  
$\beta_{0,k}=\beta_{1,k}=\beta_{2,k}=0$ for  all $k\in\mathbb{Z}.$

Then, from \eqref{2.b=2}, we obtain inductively that
\begin{longtable}{lcll}
$\beta_{j,k} $&$=$&$ \frac{j(j-1)(k+1)}{(j-k)(j-k-1)} \beta_{j-k,0}, $&$ 0 \leq k \leq  j-3;$\\ 
$\beta_{j,k} $&$=$&$0, $&$ j \geq 3 \  \text{and} \ k< 0 \ \text{or} \ k > j-3.$
\end{longtable}

Now, taking $i=-1,$ $j=2$ in equation \eqref{1.2}, we get $(k+4)\beta_{-1,k}=0,$ which implies $\beta_{-1,k}=0$ for $k \neq -4.$

Putting $i=-1,$ $j=3,$ $k=-1$ in equation \eqref{1.2}, we obtain $\beta_{-1,-4}= - \beta_{3,0}.$ Then using \eqref{2.b=2}, for $j\leq -1$ and $k=j-3,$ recurrently we obtain 
\begin{equation*}\label{1.244}
       \beta_{j,j-3} = \frac{j(j-1)(j-2)}{6} \beta_{3,0}, \ j \leq -1. 
    \end{equation*}

Now, considering equation \eqref{1.2} for $i\leq -2,$ $j=1-i,$ $k=-1-i,$ we obtain $$2 i \beta_{i,-2} - \beta_{1-i,-1-2i} - (1-2 i) \beta'_{1,-1-i}=0,$$ which implies $\beta_{i,-2}=0$ for $i\leq -2.$
Then using \eqref{2.b=2}, for $j\leq -1$ and $k\neq j-3,$ recurrently we obtain $\beta_{j,k}=0$ for $j\leq -1$ and $k\neq j-3.$

Finally, considering for $j \geq 4,$ $i=1-j,$ $k=1-j,$ in equation \eqref{1.2}, we obtain $$\beta_{1-j, 1-2 j} + 2 (j-1) \beta_{j, 0} + (2j-1 )\beta'_{1, 1 - j}=0,$$ which implies $\beta_{j, 0}=0$ for $j \geq 4.$ Hence, $\beta_{j,k}=0$ for $j\geq 4$ and $k\neq j-3.$

Therefore, in this case, we obtain the quasi-derivation $\mathfrak E.$ 

\item Let $b=1.$ Then we have
\begin{equation*}
\begin{array}{rcl} k \beta_{j,k} &= &j \beta'_{j,k},\\[1mm] (k+1)\beta_{j,k}&= &(j-1) \beta'_{j+1,k+1},
\end{array}\end{equation*}
and
\begin{equation}\label{2.b=1}\begin{array}{rcl}k(j-2)\beta_{j,k} &= & k j \beta_{j-1,k-1}.
\end{array} 
\end{equation}

Replacing $f$ by $f-\sum\limits_{k\in\mathbb{Z}}\beta_{2,k+2} {\mathfrak p}_{1,k},$   we may assume $\beta_{2,k}=0$ for  $k\in\mathbb{Z}.$
Thus, we have $\beta_{0,k}=\beta_{1,k}=\beta_{2,k}=0$ for  $k\in\mathbb{Z}.$

Putting $i=2$ and $j=-1$ in \eqref{1.2}, we get $\beta_{-1,k}=0$ for $k\neq -2.$

Putting $i=3,$ $j=k=-1$ in \eqref{1.2}, we get $\beta_{3,0}=0.$ 
Since $\beta_{2,k}=0,$ then recurrently we get $\beta_{j,k}=0$ for  $j\geq 3$  and $k<0$ or $k\geq j-3.$ 

Now, considering equation \eqref{1.2} for $i\leq -2,$ $j=1-i,$ $k=-1-i,$ we obtain $$(-1-i) \beta_{i,-2} +(1+i)\beta_{1-i,-1-2i} - (1-2 i) \beta'_{1,-1-i}=0,$$ which implies $\beta_{i,-2}=0$ for $i\leq -2.$

Then using \eqref{2.b=1}, for $j\leq -1$ and $k\neq0,$ recurrently we obtain $\beta_{j,k}=0$ for $j\leq -1,$ $k=j$ and $k>0$ or $k< j-1.$

Finally, considering for $j \geq 4,$ $i=1-j,$ $k=1-j,$ in equation \eqref{1.2}, we obtain $$(1-j)\beta_{1-j, 1-2 j} -(1-j) \beta_{j, 0} + (2j-1 )\beta'_{1, 1 - j}=0,$$ which implies $\beta_{j, 0}=0$ for $j \geq 4.$ 

Hence, by the  relation \eqref{2.b=1}, similarly to the previous case we deduce $\beta_{j,k}=0$ for all $j,k\in\mathbb{Z}.$




\item Let $b=0.$ Then, we have 

\begin{equation}\label{b=0}
\begin{array}{rcl}k \beta_{j,k} &= &j \beta'_{j,k},\\[1mm] k\beta_{j,k}&= &(j-1) \beta'_{j+1,k+1},
\end{array} 
\end{equation}
and 
\begin{equation}\label{case4_b=0}
k(j-2)\beta_{j,k} = (k-1) j \beta_{j-1,k-1}.\end{equation} 

Putting $k=j$ and $k=j-1$ in \eqref{case4_b=0}, recurrently, we obtain  
$$\beta_{j,j}=(j-1)\beta_{2,2}, \quad  \beta_{j,j-1}=\frac{j}{2}\beta_{2,1}, \quad j \geq 2.$$

Now, in equation \eqref{1.2}, substituting the triples $(i,j,k):=(1-j, j, 1)$ and $(i,j,k):=(1-j, j, 0),$ for $j \leq -1,$ we derive 
$$\beta_{j,j}=(j-1) \beta_{2,2}, \quad \beta_{j,j-1}=\frac{j}{2}\beta_{2,1}, \quad j \leq -1.$$

Where $$\beta'_{j,j} = (j-1)\beta_{2,2}, \quad \beta'_{j,j-1}=\frac{j-1}{2}\beta_{2,1}, \quad \beta'_{0,-1}=-\frac{1}{2}\beta_{2,1}.$$

Taking $k=0,1,-1$ in \eqref{b=0}, we derive 
\begin{equation}\label{2.b=0}
\begin{array}{lclllcll}
\beta_{j,-1} & =& 0 & \text{for} \  j\neq -1; &  \beta_{j,1} & =&0 &  \text{for} \ j\neq 2;\\
\beta'_{j,0}&=&0& \text{for} \  j\neq 0;&   
\beta'_{j,1}&=&0 &  \text{for} \ j\neq 2.
\end{array}
\end{equation}

Taking $i=-1,$ $k=0$ and $i=2,$ $k=1$ in \eqref{1.2}, respectively, we have 
\begin{longtable}{rcl}
$j\beta_{-1,-j}+\beta_{j,1}-(j+1) \beta'_{j-1,0} $&$=$&$0,$\\
$(1-j)\beta_{2,1-j}+\beta_{j,-1}+(j-2) \beta'_{2+j,1} $&$=$&$0,$
\end{longtable}
which implies 
\begin{equation}\label{3.b=0}\beta_{-1,j}=0\ \text{for} \  j\notin \big\{ 0; -1; -2\big\}, \qquad \beta_{2,j}=0\ \text{for} \  j\notin \big\{ 0; 1; 2\big\}.\end{equation}

Now, using \eqref{2.b=0} and \eqref{3.b=0} from \eqref{case4_b=0}, recurrently by $k$ for $k\leq -1$ and $k\geq 2$, we obtain that  
$$\beta_{j,k}=0\ \text{for} \  k\notin \big\{ 0; \ j; \ j-1\big\}.$$

Therefore, in this case, we have $f = {\mathfrak f}_1+{\mathfrak f}_2+{\mathfrak f}_3$ such that 
\begin{longtable}{|lcllcllcllcl|}
\hline
${\mathfrak f}_1(L_0) $&$=$&$0$ & ${\mathfrak f}_1(L_1) $&$=$&$0$ & ${\mathfrak f}_1(L_i) $&$=$&$  (i-1)I_i$ & ${\mathfrak f}_1(I_i)$&$ = $&$ 0$\\
\hline
${\mathfrak f}_1'(L_0) $&$=$&$-I_0$ & ${\mathfrak f}_1'(L_1) $&$=$&$0$ & ${\mathfrak f}_1'(L_i) $&$=$&$  (i-1)I_i$ & ${\mathfrak f}_1'(I_i)$&$ = $&$ 0$\\
\hline
${\mathfrak f}_2(L_0) $&$=$&$0$ & ${\mathfrak f}_2(L_1) $&$=$&$0,$ & ${\mathfrak f}_2(L_i) $&$=$&$  \frac{i} {2} I_{i-1}$ & ${\mathfrak f}_2(I_i)$&$ = $&$ 0$\\
\hline
${\mathfrak f}_2'(L_0) $&$=$&$-\frac{1} {2}I_{-1}$ & ${\mathfrak f}_2'(L_1) $&$=$&$0,$ & ${\mathfrak f}_2'(L_i) $&$=$&$  \frac{i-1} {2}I_{i-1}$ & ${\mathfrak f}_2'(I_i)$&$ = $&$ 0$\\
\hline
${\mathfrak f}_3(L_i) $&$=$&$  \beta_i I_0$ & ${\mathfrak f}_3(I_i)$&$ = $&$ 0$ & ${\mathfrak f}_3'(L_i) $&$=$&$  0$ & ${\mathfrak f}_3'(I_i)$&$ = $&$ 0$ \\
\hline
\end{longtable}
\noindent
It is easy to see that
\begin{longtable}{c}
    ${\mathfrak f}_1 \in \mathfrak{Der} \big({\mathcal W}(0,0)\big)+{\rm Ann}^Q\big({\mathcal W}(0,0)\big);$
${\mathfrak f}_2 \in \langle g_{-1}\rangle +{\rm Ann}^Q\big({\mathcal W}(0,0)\big);$
\mbox{ and }${\mathfrak f}_3 \in  {\rm Ann}^Q\big({\mathcal W}(0,0)\big).$
\end{longtable}

\end{enumerate}

\item Let $b\notin\big\{ -1;0;1;2\big\}$ and  $a = 0.$ In this case as well, similar to the situation where 
$a\notin\mathbb{Z},$ we reduce to the case of   $\beta_{0,j}=\beta_{1,j}=0$ for all  $j\in\mathbb{Z}.$
 Then we have
   \begin{center}
$\begin{array}{rcl}k \beta_{j,k} &= &j \beta'_{j,k},\\[1mm] (k+b)\beta_{j,k}&= &(j-1) \beta'_{j+1,k+1},
\end{array}$ 
\end{center}
 and   
\begin{equation} \label{case5} k(j-2)\beta_{j,k} = j(k+b-1) \beta_{j-1,k-1}.
\end{equation}
For $k=0$ we have $j(b-1)\beta_{j-1,-1}=0,$ it gives 
$\beta_{j,-1}=0,$ $j\neq -1.$

Putting $i=2,$  $j=-1$  and $i=3,$ $j=-1$ in \eqref{1.2}, we get the following relations, respectively 
\begin{equation}\label{1c}
 (k-b)\beta_{2,k}-(k-3+2b)\beta_{-1,k-3}=0,   
\end{equation}
\begin{equation}\label{2c}
(k-b+1)\beta_{3,k+1}-(k-3+3b)\beta_{-1,k-3}=4\beta'_{2,k}.    
\end{equation}
Combining    \eqref{1c} and \eqref{2c} together with $\beta_{3, k + 1} = \frac{3(b + k)}{k + 1} \beta_{2, k},$ $k\neq -1$ and using $b\notin\{0;1;2\}$   we obtain $(-4 + 2 b) \beta_{-1, -3 + k} = 0$ for $k\neq -1.$ Taking $k=-1$ in \eqref{1c}, we get $\beta_{-1, -4}=0,$  from this it follows that $\beta_{-1, k}=0,$ also  $\beta_{2,k}=0$ for $k\in\mathbb{Z}.$ 

Then setting $k = -1$ in \eqref{2c}, we have $\beta_{3,0}=0.$ Hence for determining the coefficients $\beta_{3,k},$ $k\neq 0,$ we set $j=3$ in \eqref{case5}, it implies $k \beta_{3,k}= 3(b + k - 1)  \beta_{2,k-1} = 0.$  So we deduce $ \beta_{3,k}=0$ for $k\in\mathbb{Z}.$
   
 Now, in the equation \eqref{case5}, substituting $(j,k):=(4,k+4),$    we derive 
\begin{center}    $(k + 3) \beta_{4, k + 3} = 2 (b + k + 2) \beta_{3, k + 2},$
\end{center} it implies $\beta_{4,k+3}=0$ for $k\neq -3.$ In the equation \eqref{1.2}, substituting the triple $(i,j,k):=(4,-1,-1),$    we derive 
    \begin{center}
        $0\ =\ (5 - 4 b) \beta_{-1, -5} - b \beta_{4, 0} - 
   5 \beta'_{3, -1}\ = \ -b \beta_{4, 0}\ =\ 0.$
    \end{center} It follows that $\beta_{4,k}=0$ for all $k\in\mathbb{Z}.$
 
Putting $i=3,$ $j=-2$ and $i=4,$ $j=-2$ in \eqref{1.2}, we have 
\begin{longtable}{lclcl}
$0$&$=$&$ (3 - 3 b - k) \beta_{-2, -3 + k} + (2 - 2 b + 
      k) \beta_{3, 2 + k} - 5 \beta'_{1, k} $&$=$&$ (3 - 3 b - k) \beta_{-2, -3 + k};$\\  
$0$&$ =$&$ (3 - 4 b - k) \beta_{-2, -3 + k} + (3 - 2 b + k) \beta_{
     4, 3 + k} - 
   6 \beta'_{2, k + 1} $&$=$&$  (3 - 4 b - k) \beta_{-2, -3 + k} .$
   \end{longtable}
Combining these relations we obtain $\beta_{-2, -3 + k}=0 $ for  $k\in\mathbb{Z}.$ Hence 
$\beta_{-2,  k}=0 $ for  $k\in\mathbb{Z}.$ 

Now, by induction on $j,$ we show that all coefficients $\beta_{j,k}$ are 0.
 Thus, let $\beta_{j-1,k}=0,$ $j\geq2,$ $k\in\mathbb{Z},$ then from \eqref{case5}   we get 
 $k (j - 2) \beta_{j, k} = 0,$ it implies $ \beta_{j, k} = 0,$ $ k \neq 0. $
Putting $i=k=-1,$  in \eqref{1.2}, we obtain 
$$0 \ = \ \big(-1 + (-1 + b) j \big) \beta_{-1, -1 - j} + 
    b \beta_{j, 0} + (1 + j) \beta'_{j-1, -1}\ =\  b \beta_{j, 0},$$ so $\beta_{j, 0}=0,$ it follows that  $ \beta_{j, k} = 0,$ $j\geq2$ and  $k\in\mathbb{Z}.$ 

 Next, we consider the coefficients $\beta_{-j,k} $ for  $j\geq2,$ $k\in\mathbb{Z}.$ Let  $\beta_{2-j,k}=0,$ $j\geq2,$ $k\in\mathbb{Z},$
 then in the equation \eqref{1.2}, we substitute the triple with $(i,j,k):=( 1 - j,j,k)$ and   $(i,j,k):=( 1 - j,j+1,k),$  we obtain 
 \begin{longtable}{lclcl}$0$&$ =$&$ ((b-1) j + k) \beta_{1 - j, k-j } - (b-1 + j - b j + k) \beta_{
     j, -1 + j + k} + (2 j-1) \beta'_{1, k} $\\ &$=$&$ ((b-1) j + k) \beta_{1 - j, k-j};$\\
$ 0 $&$=$&$ (b + k - j + b j) \beta_{1 - j, k - j} - (b + k + j - b j) \beta_{1 + j, 
     k + j}+ 2 j \beta'_{2, k + 1} $\\ &$=$&$ (b + k - j + b j) \beta_{1 - j, 
     k - j}.$
     \end{longtable}
Combining these relations we obtain $\beta_{1-j,  k-j}=0 $ for  $k\in\mathbb{Z}.$ Hence, by induction on $j$ we deduce that all 
$\beta_{j,  k}=0 $ for  $j,k\in\mathbb{Z}.$ 
\end{enumerate}
\end{proof}

\begin{theorem}\label{MT}

  $ {\rm Q}\mathfrak{Der} \big({\mathcal W}(a,b)\big) \ =\ 
  \mathfrak{Der}  \oplus 
  \mathfrak{Der}_{\frac 12}   \oplus 
  \mathfrak{Der}_{\frac 1{1-b}}  
  \oplus  \big\langle {\mathfrak F}_j \big\rangle
  \oplus  \big\langle {\mathfrak p}_{b,j} \big\rangle  
  \oplus {\rm Ann}^Q,$ where 
${\rm Ann}^Q \neq 0$   only if $(a,b)=(0,0),$
$ \mathfrak{Der}_{\frac 1{1-b}}   = 
\big\langle {\mathfrak G}_j \ |\  {\mathfrak G}_j(L_i)= I_{i+j},\ {\mathfrak G}_j(I_i) = 0\big\rangle$\footnote{ 
$\mathfrak{Der}_{\infty} :=\{ f \in {\rm End} \ |\  [f(x),y]+[x,f(y)]=0\}.$ } and

\begin{longtable}{|rclrcllll|}
\hline
${\mathfrak F}_j(L_i) $&$=$&$  i L_{i+j},$ & ${\mathfrak F}_j(I_i)$&$ = $& \multicolumn{4}{l|}{$ (a+i) I_{i+j}$ }\\
${\mathfrak F}'_j(L_i) $&$=$&$  (i+j) L_{i+j},$& ${\mathfrak F}'_j(I_i) $&$=$&\multicolumn{4}{l|}{$  (a+i+j) I_{i+j}$ }\\
\hline

${\mathfrak p}_{0,j}(L_i)$ & $=$ & $\frac{i(i-1)}{(a+i+j)}I_{i+j}$, & ${\mathfrak p}_{0,j}(I_i)$ & $=$ & $0$ & \text{for}&  $a\notin \mathbb{Z},$  &$b=0$\\
${\mathfrak p}'_{0,j}(L_i)$ & $=$ & ${(i-1) }I_{i+j}$, & ${\mathfrak p}'_{0,j}(I_i)$ & $=$ & $0$ & \text{for}& $a\notin \mathbb{Z}$ & $b=0$ \\ \hline

${\mathfrak p}_{1,j}(L_i)$ & $=$ & $i^2 I_{i+j}$, & ${\mathfrak p}_{1,j}(I_i)$ & $=$ & $0$ & \text{for}& & $b=1$\\
${\mathfrak p}'_{1,j}(L_i)$ & $=$ & ${i(a+i+j)}I_{i+j}$, & ${\mathfrak p}'_{1,j}(I_i)$ & $=$ & $0$ & \text{for}& & $b=1$\\
\hline
 
${\mathfrak p}_{2,j}(L_i)$ & $=$ & ${i(i-1)(a+1+i+j)}I_{i+j}$, & ${\mathfrak p}_{2,j}(I_i)$ & $=$ & $0$ & \text{for}&      & $b=2$\\
${\mathfrak p}'_{2,j}(L_i)$ & $=$ & ${(i-1)(a+i+j)(a+1+i+j)}I_{i+j}$, & ${\mathfrak p}'_{2,j}(I_i)$ & $=$ & $0$ & \text{for}&   & $b=2$ \\
\hline

\end{longtable}

In particular, 
$ {\rm Q}\mathfrak{Der} \big({\mathcal W}(a,-1)\big) \ =\ 
   \mathfrak{Der} \big({\mathcal W}(a,-1)\big)   \oplus 
  \mathfrak{Der}_{\frac 12} \big({\mathcal W}(a,-1)\big).$


\end{theorem}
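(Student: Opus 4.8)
The plan is to use the abelian ideal ${\rm V}(a,b)=\big\langle I_i\big\rangle_{i\in\mathbb Z}$ together with the identification ${\mathcal W}(a,b)/{\rm V}(a,b)\cong {\rm W}$ to reduce everything to the two cases already settled: quasi-derivations of ${\rm W}$ (Theorem \ref{qWitt}) and module quasi-derivations ${\rm Q}\mathfrak{Der}\big({\rm W},{\rm V}(a,b)\big)$ (Lemma \ref{lemmabeta}). Writing an arbitrary $f\in {\rm Q}\mathfrak{Der}\big({\mathcal W}(a,b)\big)$ with related map $g$ in block form
$$f(L_i)=\sum_{k}a_{i,k}L_k+\sum_{k}\beta_{i,k}I_k,\qquad f(I_i)=\sum_{k}c_{i,k}L_k+\sum_{k}d_{i,k}I_k,$$
the proof is a successive elimination of the four blocks $(a_{i,k}),(\beta_{i,k}),(c_{i,k}),(d_{i,k})$, each stage subtracting generators from the right-hand side of the theorem.

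First I would treat the $L\to L$ block. Projecting $f(L_i)$ onto its ${\rm W}$-component kills all $I$-terms appearing in the brackets among the $L_i$, so $\bar f:=(a_{i,k})$ is a quasi-derivation of ${\rm W}$ whose related map is the ${\rm W}$-component of $g$. By Theorem \ref{qWitt}, $\bar f\in\langle d_j\rangle\oplus\langle\varphi_j\rangle$; since a direct check gives ${\mathfrak F}_j|_{\rm W}=-d_j+j\varphi_j$, the span $\langle d_j\rangle+\langle{\rm Id}\rangle+\langle{\mathfrak F}_j\rangle$ already surjects onto ${\rm Q}\mathfrak{Der}({\rm W})$. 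Subtracting the matching combination of inner derivations, ${\rm Id}$ and the ${\mathfrak F}_j$, I may assume $f(L_i)\in {\rm V}(a,b)$ for all $i$. The restriction $f|_{\rm W}\colon {\rm W}\to {\rm V}(a,b)$ given by $(\beta_{i,k})$ then satisfies the module identity \eqref{qderm}, so it lies in ${\rm Q}\mathfrak{Der}\big({\rm W},{\rm V}(a,b)\big)$, and Lemma \ref{lemmabeta} expresses it through the $L\to I$ maps of Proposition \ref{lemma11}. All of these vanish on the $I_i$, so subtracting the corresponding full maps leaves both $I$-blocks untouched and yields $f(L_i)=0$; here the auxiliary generators collapse onto the summands $\mathfrak{Der}$, $\mathfrak{Der}_{\frac1{1-b}}$ and $\langle{\mathfrak p}_{b,j}\rangle$ through relations such as $h={\mathfrak G}_0$, ${\rm ad}_{I_j}=b\,g_j+(j+a){\mathfrak G}_j$ and ${\mathfrak E}=\tfrac16{\mathfrak p}_{2,-3}$ (after the renormalisation of ${\mathfrak p}_{2,j}$ used in the statement).

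What remains is a quasi-derivation with $f(L_i)=0$, and this ${\rm V}(a,b)$-part is the main obstacle, since neither Theorem \ref{qWitt} nor Lemma \ref{lemmabeta} governs maps out of the ideal. Testing the defining identity on $[L_i,L_j]$ forces $g(L_n)=0$, and testing it on $[L_j,I_i]$ splits into
$$c_{i,n-j}(2j-n)=-(i+bj+a)\gamma_{i+j,n},\qquad d_{i,n-j}(n-j+bj+a)=(i+bj+a)\delta_{i+j,n},$$
where $\gamma,\delta$ are the $I\to L$ and $I\to I$ blocks of $g$, while $[I_i,I_m]=0$ gives a further skew relation among the $c_{i,k}$. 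A finiteness (band) argument on the first relation, combined with the $[I_i,I_m]$-constraint, forces $c_{i,k}=0$ and $\gamma=0$; the diagonal $n=i+j$ of the second relation forces $d_{i,i}$ to equal a single constant $\lambda$ and all off-diagonal $d_{i,k}$ to vanish. Hence $f=\lambda D_1$ modulo a map whose image lies in ${\rm Ann}\big({\mathcal W}(a,b)\big)$, i.e.\ modulo ${\rm Ann}^Q$, which by Lemma \ref{lemma} is nonzero only for $(a,b)=(0,0)$. The delicate points are the finitely many indices where a coefficient $k+bj+a$ degenerates and the exceptional pair $(0,0)$, where precisely the ${\rm Ann}^Q$-freedom survives.

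Collecting the generators removed in the three stages gives the claimed decomposition; its directness follows because the summands act on disjoint blocks ($\langle{\mathfrak F}_j\rangle$ is the only one with nonzero $L\to L$ part, ${\mathfrak p}_{b,j}$ and ${\mathfrak G}_j$ differ by their $\delta$-weight on the $L\to I$ block, $D_1$ is diagonal on the $I\to I$ block, and ${\rm Ann}^Q$ lands in the centre). Finally, the displayed special case $b=-1$ is immediate from the general formula: there $\tfrac1{1-b}=\tfrac12$, so $\mathfrak{Der}_{\frac1{1-b}}$ merges into $\mathfrak{Der}_{\frac12}$; the identity ${\mathfrak F}_j=-d_j+j\varphi_j$ now holds on all of ${\mathcal W}(a,-1)$, not merely modulo the ideal, absorbing $\langle{\mathfrak F}_j\rangle$ into $\mathfrak{Der}\oplus\mathfrak{Der}_{\frac12}$; no ${\mathfrak p}_{b,j}$ exist and ${\rm Ann}^Q=0$, leaving ${\rm Q}\mathfrak{Der}\big({\mathcal W}(a,-1)\big)=\mathfrak{Der}\oplus\mathfrak{Der}_{\frac12}$.
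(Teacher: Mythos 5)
Your proposal follows essentially the same route as the paper: split $f$ into graded blocks, dispose of the $L\to L$ block via Theorem \ref{qWitt} (the paper likewise matches it by a combination of mappings from Proposition \ref{lemma11}, which is exactly your observation that $\langle d_j\rangle+\langle{\rm Id}\rangle+\langle{\mathfrak F}_j\rangle$ surjects onto ${\rm Q}\mathfrak{Der}({\rm W})$), dispose of the $L\to I$ block via Lemma \ref{lemmabeta}, and then kill the $I\to L$ and $I\to I$ blocks using the linear relations coming from $[L_j,I_i]$ and $[I_i,I_m]=0$ --- precisely the paper's Steps $\alpha$, $\beta$, $\gamma$ and $\sigma$. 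The only point where you are thinner than the paper is this last stage, where the paper's Steps $\gamma$ and $\sigma$ perform an explicit case analysis over $(a,b)$ (in particular $b=1$, $(a,b)=(0,0)$, $(0,1)$, and $a\notin\mathbb Z$) in place of your ``finiteness (band) argument'', but the equations you derive are the correct ones and the conclusions agree.
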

\begin{proof}

 First of all, we rewrite  the mapping from proposition \ref{lemma11} in a more compact way:
 mappings ${\mathfrak p}_{i,j}$ will be multiplied by a suitable constant and joined with $\mathfrak E;$
 mappings $g_j$ and $h$ give (modulo a suitable derivation) mappings $\mathfrak G_j.$
 
 By   Lemma \ref{lemma}, we may assume that $a=0,$ if $a\in\mathbb{Z}.$
   Let $f \in  {\rm Q}\mathfrak{Der}\big({\mathcal W}(a,b)\big),$ then 
\begin{longtable}{rclrcl}
$f(L_i) $&$ = $&$ \sum\limits_{k\in\mathbb{Z}}\alpha_{i,k}L_k+\sum\limits_{k\in\mathbb{Z}}\beta_{i,k}I_k, $&$ 
f(I_i) $&$ = $&$  \sum\limits_{k\in\mathbb{Z}}\gamma_{i,k}I_k+\sum\limits_{k\in\mathbb{Z}}\sigma_{i,k}L_k,$\\
$f'(L_i) $&$ = $&$  \sum\limits_{k\in\mathbb{Z}}\alpha'_{i,k}L_k+\beta'_{i,k}I_k, $&$ f'(I_i) $&$  =$&$  \sum\limits_{k\in\mathbb{Z}}\gamma'_{i,k}I_k+\sigma'_{i,k}L_k.$
\end{longtable}

\medskip 
\noindent
{\bf Step $\alpha$:} Reduction to the case of $\alpha_{i,j}=0.$

Given that ${\mathcal W}(a,b)$ is a $\mathbb{Z}_2$-graded algebra, it is possible to express $f$
as the sum of two quasi-derivations $f_0$  and $f_1,$ such that 
\begin{longtable}{rclrcl}
$f_0\big(\langle{L_i}\rangle_{i\in\mathbb{Z}}\big)$&$\subseteq$&$\langle{L_i}\rangle_{i\in\mathbb{Z}},$&
$f_0\big(\langle{I_i}\rangle_{i\in\mathbb{Z}}\big) $&$\subseteq$&$\langle{I_i}\rangle_{i\in\mathbb{Z}},$\\
$f_1\big(\langle{L_i}\rangle_{i\in\mathbb{Z}}\big)$&$\subseteq$&$\langle{I_i}\rangle_{i\in\mathbb{Z}},$&$ f_1\big(\langle{I_i}\rangle_{i\in\mathbb{Z}}\big)$&$\subseteq$&$\langle{L_i}\rangle_{i\in\mathbb{Z}}.$
\end{longtable}
We begin by finding the form of $f_0.$ 
Let $f_{\rm W} = f_0{\big|}_{\langle{L_i}\rangle_{i\in\mathbb{Z}}}$
and $f_{\rm W}$ is a quasi-derivation of the Witt algebra ${\rm W}.$
Theorem \ref{qWitt} gives the description of $f_{\rm W}$ as a sum of derivations and $\frac{1}{2}$-derivations.
It is easy to see that there is   $f^*_{\rm W} \in  {\rm Q}\mathfrak{Der}\big({\mathcal W}(a,b)\big),$ constructed as a sum of mappings from Proposition \ref{lemma11}, such that 
 $f_{\rm W}^*{\big|}_{\langle{L_i}\rangle_{i\in\mathbb{Z}}} = f_0{\big|}_{\langle{L_i}\rangle_{i\in\mathbb{Z}}}.$
We can replace $f_0$ by $f_0-f^*_{\rm W}$. 
Hence,  we can assume that $\alpha_{i,k}=0$ for all $i, k\in\mathbb{Z}.$

\medskip 

According to the definition of  
    quasi-derivations, the coefficients  
must satisfy the following system of equations:
\begin{longtable}{rcl}
$\sum\limits_{ k\in\mathbb{Z}}\left((k+a+bj)\beta_{i,k}I_{j+k} -(k+a+bi)\beta_{j,k} I_{i+k}\right)$&$=$&$(i-j)\sum\limits_{ k\in\mathbb{Z}}\beta'_{i+j,k}I_k,$ \\
$\sum\limits_{ k\in\mathbb{Z}}\left((i+a+bk)\sigma_{j,k}I_{i+k} -(j+a+bk)\sigma_{i,k} I_{j+k}\right)$&$=$&$0,$ \\
$\sum\limits_{ k\in\mathbb{Z}}(i-k)\sigma_{j,k}L_{i+k}$&$=$& $-(j+a+bi)\sum\limits_{ k\in\mathbb{Z}}\sigma'_{i+j,k} L_{k},$\\ 
$\sum\limits_{ k\in\mathbb{Z}}(k+a+bi)\gamma_{j,k} I_{i+k}$&$=$&$(j+a+bi)\sum\limits_{ k\in\mathbb{Z}}\gamma'_{i+j,k}I_k.$ 
\end{longtable}
We obtain  the following equations by comparing the coefficients of the basis:
\begin{eqnarray}
\label{1.3} (i+a+b(k-i))\sigma_{j,k-i} -(j+a+b(k-j))\sigma_{i,k-j}\ =&0, \\[1mm]
\label{1.4} (2i-k)\sigma_{j,k-i}\ =& -(j+a+bi)\sigma'_{i+j,k} ,\\[1mm] 
\label{1.5}  (k-i+a+bi)\gamma_{j,k-i}\ =&(j+a+bi)\gamma'_{i+j,k}. 
\end{eqnarray}

\noindent
{\bf Step $\beta$:} We find   $\beta_{i,j}.$
It follows from Lemma \ref{lemmabeta}.

\medskip 
\noindent
{\bf Step $\gamma$:} Reduction to the case of  $\gamma_{i,j}=0.$

\noindent  
Replacing $f$ by $f-\gamma_{0,0}D_1,$ we can suppose that $\gamma_{0,0}=0.$

\begin{enumerate}
    \item[$(\gamma_1)$] If $a=0$ and $b\notin \{0,1\},$ then  
    letting $i=j=0$
in \eqref{1.5}, we obtain $\gamma_{0,k}=0$ for $k\neq 0,$
 i.e., $\gamma_{0,k}=0$ for all $k\in \mathbb Z.$
Putting $j=0$ in \eqref{1.5},  it gives 
\begin{equation}\label{1.9''}
  i\gamma'_{i,k}=0, \quad \mbox{ for }i\neq 0.
\end{equation} 
Putting $i=0$ in \eqref{1.5}, we have 
\begin{equation}\label{1.10''}
k\gamma_{j,k}=j\gamma'_{j,k}=0, \quad\mbox{ for }  0\notin \{j,k\}.
\end{equation}
Taking $j=-i$ in \eqref{1.5}, we get $\gamma'_{0,k}=0,$  for all $k\in\mathbb{Z}.$ 
Taking $k=i$ in \eqref{1.5}, we get  $\gamma_{j,0}=0,$  for $j\neq 0.$ 
Putting $k=i$ in \eqref{1.9''} and $k=j$ in \eqref{1.10''}, we have  
$\gamma'_{i,i}=\gamma_{i,i}=0,$  for $i\neq 0.$
Hence, $\gamma_{i,j}=0$ for all $i,j \in \mathbb Z.$ 

  \item[$(\gamma_2)$] If $(a,b)=(0,0),$   then  
$(k-i) \gamma_{j,k-i}=j \gamma'_{i+j,k}.$
Taking $k=i,$ we have that $\gamma'_{i+j,i}=0$ for $j\neq0.$ 
Hence $\gamma_{i,j}=0$ of $i\neq j$ and $\gamma_{j,j}=\gamma_{i+j,i+j}',$ that gives $\gamma_{i,i}=\gamma_{j,j},$ for all $i,j \in \mathbb Z;$  
i.e. $\gamma_{i,i}=0$ for all $i\in \mathbb Z.$  Hence, $\gamma_{i,j}=0$ for all $i,j \in \mathbb Z.$   

 \item[$(\gamma_3)$] If $(a,b)=(0,1),$   then  
$k \gamma_{j,k-i}=(i+j) \gamma'_{i+j,k}.$
Taking $j=-i,$ we have that $\gamma_{j,k+j}=0$ for $k\neq0.$ 
Hence $\gamma'_{i,j}=0$ of $i\neq 0$ and $\gamma_{i,i}=\gamma_{j,j},$ for all $i,j \in \mathbb Z;$  
i.e., $\gamma_{i,i}=0$ for all $i\in \mathbb Z.$  Hence, $\gamma_{i,j}=0$ for all $i,j \in \mathbb Z.$   
\footnote{Let us note that the present case gives an exceptional quasi-derivation $f=0$ with related $f'\neq0.$
Due to non-perfectless of ${\mathcal W}(0, 1),$ the mapping $f'$ can be arbitrary satisfying 
$f'[{\mathcal W}(0, 1),{\mathcal W}(0, 1)]=0,$ but  $f'({\mathcal W}(0, 1))\neq0.$}

\item[$(\gamma_4)$] 
If $a \notin {\mathbb Z},$ then  putting $i=0$ in \eqref{1.5}, we obtain $(k+a)\gamma_{j,k}=(j+a)\gamma'_{j,k}.$ 
 Letting $j=0$ in \eqref{1.5}, we have 
 $$(k-i+a+bi)\gamma_{0,k-i} =(a+bi)\gamma'_{i,k},$$ it implies  
$$(i+a)(k-i+a+bi)\gamma_{0,k-i} =(a+bi)(k+a)\gamma_{i,k}.$$ 
From this equation we get
\begin{equation}\label{103}
\gamma_{i,k}\ =\ \frac{(i+a)(k-i+a+bi)}{(a+bi)(k+a)}\gamma_{0,k-i}, \quad  \mbox{ if }bi\neq- a.
\end{equation} 
Taking $i=-j$ in \eqref{1.5}, we get 
$$(k+j+a-bj)\gamma_{j,k+j} \ =\ (j+a-bj)\gamma'_{0,k},$$
then $$(k+j+a-bj)\gamma_{j,k+j}\ =\ \frac{(j+a-bj)(k+a)}{a}\gamma_{0,k},$$
for $k\neq 0$ and  $a\neq 0,$ it implies
$$\gamma_{j,k+j}\ =\ \frac{(j+a-bj)(k+a)}{a(k+j+a-bj)}\gamma_{0,k}, \quad \mbox{ if } k\neq bj-a-j.$$
Substituting $k$ with $k-j$ we get 
\begin{equation}\label{104}
\gamma_{j,k}\ =\ \frac{(j+a-bj)(k-j+a)}{a(k+a-bj)}\gamma_{0,k-j}, \mbox{ if }
k\neq bj-a.     \end{equation} By the equalities  \eqref{103} and \eqref{104}, we deduce $\gamma_{0,k-j}=0$ for $k\neq j,$   $bj\neq-a,$ $k\neq bj-a.$ 
Thus, $$\gamma_{j,k}=\gamma'_{j,k}=0, \quad k\neq j, \quad bj\neq-{a},\quad k\neq b j-a. $$
If $bj=-{a},$ then $b\neq 0,$ and by choosing suitable  $i\neq 0,$ 
\eqref{1.5}  implies 
$\gamma_{-\frac{a}{b},k}=0.$ 
Similarly, we get $\gamma_{j,k}=0$ for $k= b j-a.$

At the same time,  \eqref{103} and \eqref{104} give  $\gamma_{i,i}=\gamma_{0,0}$ for all $i \in \mathbb Z,$ i.e., $\gamma_{i,i}=0$ for all $i\in \mathbb Z.$  
Hence, $\gamma_{i,j}=0$ for all $i,j \in \mathbb Z.$   
 \end{enumerate}

\medskip 

\noindent
{\bf Step $\sigma$:} We prove that $\sigma_{i,j}=0.$

\begin{enumerate}
    \item[$(\sigma_1)$] If $(a,b)\neq (0,0)$ and $b\neq 1.$
Taking $j=0$ in  \eqref{1.3} , it gives 
\begin{equation}\label{1.6}
   \big(i+a+b(k-i) \big)\sigma_{0,k-i} -(a+bk)\sigma_{i,k}\ =\ 0.
\end{equation}
From \eqref{1.6} we have  
\begin{equation}\label{1.7}
  \sigma_{i,k}=\frac{ a+bk-i(b-1)}{a+bk}\sigma_{0,k-i},\mbox{\ \ for \ }a\neq -bk.
\end{equation}
Using   \eqref{1.7}  in \eqref{1.3},  we obtain 
\begin{equation}\label{1.8}
  \frac{ bij(i-j)(b-1)}{(a+b(k-i))(a+b(k-j))}\sigma_{0,k-i-j}=0,
  \mbox{\ \ for \ }a\neq b(i-k), a\neq b(j-k).
\end{equation}
By \eqref{1.8} we obtain $\sigma_{0,k-i-j}=0,$  it gives $ \sigma_{i,k}=0.$ Then by \eqref{1.4} it implies 
$ \sigma'_{i,k}=0.$ 

 \item[$(\sigma_2)$]  If $(a,b)=(0,0),$ then substituting $j$ with $i$ in \eqref{1.4} it gives 
\begin{equation}\label{26ai}
(2j-k)\sigma_{i,k-j}=-i\sigma'_{i+j,k},  
\end{equation}
from \eqref{1.4}  and \eqref{26ai} we get  
\begin{equation}\label{27ai}
j(2j-k)\sigma_{i,k-j}=i(2i-k)\sigma_{j,k-i},  
\end{equation} for all $k\in\mathbb{Z},$ $i,j\neq0.$
Using \eqref{1.3}  in \eqref{27ai} we obtain  
$j(i-j)\sigma_{i,k-j}=0,$ then $\sigma_{i,k}=0$ for $i\neq 0.$
Putting $i=0$ in  \eqref{1.3} , it gives $\sigma_{0,k-j}=0,$ 
it implies $\sigma'_{i,k}=0$ for all $k\in\mathbb{Z}.$

 \item[$(\sigma_3)$]  If $b=1,$ then by \eqref{1.3}, we have $(k+a)(\sigma_{j,k-i}-\sigma_{i,k-j})=0,$ if $(k,a)\neq (0,0).$ 
 Thus $\sigma_{j,k-i}=\sigma_{i,k-j},$ for $(k,a)\neq(0,0).$ 
 It implies $\sigma_{i,k}=\sigma_{0,k-i}$ for all $k\in\mathbb{Z}$ and  $(k,a)\neq(0,0).$ 
 If $(k,a)=(0,0),$ then $\sigma_{j,i-i}=\sigma_{i,i-j}$ gives $\sigma_{j,0}=\sigma_{0,-j}.$ 
Setting $j=0$ in \eqref{1.4}  it gives 
\begin{equation}\label{31ai}
 (2i-k)\sigma_{0,k-i}=-(a+i)\sigma'_{i,k}.   
\end{equation}
Setting $i=0$, it gives 
\begin{equation}\label{32ai}
 -k\sigma_{j,k}=-(a+j)\sigma'_{j,k}.   
\end{equation}
From \eqref{31ai} and \eqref{32ai}, we obtain  
 $\sigma_{0,k-i}=0,$ hence we deduce 
$\sigma_{i,k}=0 $ and $\sigma'_{i,k}=0$ for all $i,k\in\mathbb{Z}.$ 
\end{enumerate}
\medskip

\end{proof}

\begin{corollary}
 ${\mathcal W}(a,b)$ admits a nontrivial
    transposed $\delta$-Poisson structure $\cdot$:
\begin{enumerate}
    \item if $b\notin \big\{ -1;1\big\},$ then  $\delta=\frac 1{1-b}$ and 
    there is a finite set $\{ \mu_k\}_{k \in \mathbb Z},$ such that
\begin{center}
    $L_i \cdot L_j \ =\  \sum\limits_{k \in \mathbb Z} \mu_k I_{i+j+k}.$    
\end{center}

    \item if $b= 1,$ then   $\delta=1$ and 
    there is a finite set $\{ \mu_k\}_{k \in \mathbb Z},$ such that
\begin{center}
    $L_i \cdot L_j \ =\  \sum\limits_{k \in \mathbb Z} (i+j+k+\mu_k) 
    I_{i+j+k}.$   

\end{center}
   \item if $b = -1,$ then  $\delta=\frac 12 $ and 
    there is a finite set $\{ \nu_k,\mu_k\}_{k \in \mathbb Z},$ such that
\begin{longtable}{rcl}
$L_i \cdot L_j$&$ =$&$ \sum\limits_{k \in \mathbb Z} \nu_k L_{i+j+k} + \sum\limits_{k \in \mathbb Z} \mu_k I_{i+j+k};$\\    
$L_i \cdot I_j $&$=$&$ \sum\limits_{k \in \mathbb Z} \nu_k I_{i+j+k}.$    
\end{longtable}

\end{enumerate}

\end{corollary}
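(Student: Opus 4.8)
The plan is to derive the corollary from Theorem \ref{MT} by means of Lemma \ref{glavlem}. The key reformulation is that, for a transposed $\delta$-Poisson structure $\cdot$, freezing the first slot of identity \eqref{deltranspois} says exactly that each multiplication operator $R_z\colon x\mapsto z\cdot x$ is a $\delta$-derivation, while commutativity makes $R_z$ simultaneously the right multiplication by $z$. Thus a transposed $\delta$-Poisson structure is the same datum as a symmetric bilinear map $x\cdot y$ all of whose partial operators lie in $\mathfrak{Der}_\delta\big(\mathcal{W}(a,b)\big)$ — a symmetric $\delta$-biderivation — subject to associativity. Since Theorem \ref{MT} records $\mathfrak{Der}_\delta$ for every $\delta$, the whole problem reduces to bookkeeping with these generators.

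First I would pin down the admissible $\delta$. By Lemma \ref{glavlem} a nonzero structure forces $\mathfrak{Der}_\delta\not\subseteq\langle {\rm Id}\rangle$; scanning the summands of ${\rm Q}\mathfrak{Der}\big(\mathcal{W}(a,b)\big)$ in Theorem \ref{MT}, this can happen only for $\delta=1$ (where $\mathfrak{Der}_1=\mathfrak{Der}$ is always large), for $\delta=\tfrac12$ when $b=-1$ (there $\mathfrak{Der}_{1/2}=\langle\varphi_j,\psi_j\rangle$), and for $\delta=\tfrac1{1-b}$ (with $\mathfrak{Der}_{1/(1-b)}=\langle\mathfrak{G}_j\rangle$). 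After discarding those that fail to assemble into a genuine commutative associative product, the surviving pairs $(b,\delta)$ are precisely $b\notin\{-1,1\}$ with $\delta=\tfrac1{1-b}$, $b=-1$ with $\delta=\tfrac12$, and $b=1$ with $\delta=1$, matching the statement (note $\tfrac1{1-b}=1$ at $b=0$ and $\tfrac1{1-b}=\tfrac12$ at $b=-1$).

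Then, in each case, I would expand $R_{L_m}$ and $R_{I_m}$ as finitely supported combinations of the generators of $\mathfrak{Der}_\delta$ with $m$-dependent coefficients and impose commutativity $x\cdot y=y\cdot x$. Matching the coefficient of a fixed basis vector $I_n$ (or $L_n$) on the two sides collapses each two-index coefficient family to a single finitely supported sequence indexed by the total shift; this is what manufactures the families $\{\mu_k\}$ and $\{\nu_k\}$ and the asserted formulas for $L_i\cdot L_j$, while simultaneously forcing $L_i\cdot I_j$ and $I_i\cdot I_j$ into the shape dictated by $\mathfrak{G}_j$ (respectively by $\varphi_j,\psi_j$). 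In particular $I\cdot I=0$ in every case, $L\cdot I=0$ when $b\neq-1$, and $L_i\cdot I_j=\sum_k\nu_k I_{i+j+k}$ when $b=-1$.

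Finally comes associativity. When $b\neq-1$ every product lands in the abelian ideal $\langle I_i\rangle_{i\in\mathbb{Z}}$, which is annihilated by $\cdot$, so all triple products vanish and associativity holds automatically; the only genuine computation is $b=-1$, where $L_i\cdot L_j$ can carry an $L$-component and one checks $(L_i\cdot L_j)\cdot L_k=L_i\cdot(L_j\cdot L_k)$ by expanding both sides symmetrically to $\sum_{s,t}\nu_s\nu_t L_{i+j+k+s+t}+\sum_{s,t}(\nu_s\mu_t+\mu_s\nu_t)I_{i+j+k+s+t}$ using $L_i\cdot I_j=\sum_k\nu_k I_{i+j+k}$. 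I expect the real obstacle to be completeness rather than existence, namely ruling out every $\delta$ outside the short list. For most $\delta$ this is immediate from Lemma \ref{glavlem} because $\mathfrak{Der}_\delta\subseteq\langle {\rm Id}\rangle$, but the value $\delta=1$ is delicate, since $\mathfrak{Der}_1=\mathfrak{Der}$ is always large; there one must use that the operators $R_z$ form a commutative associative subalgebra of $\mathfrak{Der}_1$ closed under $R_xR_y=R_{x\cdot y}$ to show that, unless $b\in\{0,1\}$, the only derivation-valued product surviving symmetry and associativity is the trivial one.
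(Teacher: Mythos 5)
Your overall route is the same as the paper's: restrict the admissible values of $\delta$ via Lemma \ref{glavlem} and the classification of $\delta$-derivations in Theorem \ref{MT}, then reconstruct the product by writing each multiplication operator as a finitely supported combination of the generators of $\mathfrak{Der}_{\delta}$, impose commutativity to collapse the coefficients to the single families $\{\mu_k\},\{\nu_k\}$, and verify associativity (trivially, since for $b\neq -1$ all products land in the abelian ideal $\langle I_i\rangle$, and by the symmetric expansion you wrote for $b=-1$). The paper compresses exactly this into two sentences by pointing to the method of \cite[Theorem 25]{FKL}. The one place where you genuinely diverge is the exclusion of $\delta=1$ for $b\notin\{0,1\}$: the paper disposes of it by citing the classification of biderivations of ${\mathcal W}(a,b)$ in \cite{tang2} --- a transposed $1$-Poisson product is precisely a symmetric biderivation, so the known description of biderivations immediately tells you what survives --- whereas you propose an ad hoc argument via the fact that the operators $R_z$ form a commutative associative subalgebra with $R_xR_y=R_{x\cdot y}$. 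Your mechanism is plausible but is only stated, not carried out; the citation route is both shorter and already available in the literature, and you may as well adopt it. A small side remark: as literally printed, the $b=1$ formula $L_i\cdot L_j=\sum_k(i+j+k+\mu_k)I_{i+j+k}$ is not a finite sum for finitely supported $\{\mu_k\}$; the intended operators are the inner derivations $\operatorname{ad}_{I_{j+k}}(L_i)=(i+j+k+a)I_{i+j+k}$, which is what your $\delta=1$ analysis actually produces, so your reconstruction is consistent with the corrected reading.
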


\begin{proof}
    The description of all biderivations of   ${\mathcal W}(a,b)$ is given in \cite{tang2}.
    The description of transposed $\frac 1{1-b}$-structures is based on the classification of $\delta$-derivations of  ${\mathcal W}(a,b)$ (see, Theorem \ref{MT}) and similar to the case of $b=-1$ considered in \cite[Theorem 25]{FKL}.
\end{proof}

 \newpage

\end{document}